\newtheorem{theorem}{Theorem}[section]
\newtheorem{lemma}[theorem]{Lemma}
\newtheorem{proposition}[theorem]{Proposition}
\newtheorem{examples}[theorem]{Examples}
\newtheorem{remarks}[theorem]{Remarks}
\newtheorem{question}[theorem]{Question}
\newtheorem*{theorem*}{Theorem}
\newtheorem{maintheorem}{Theorem}
\newtheorem{maincorollary}[maintheorem]{Corollary}
\newtheorem{mainproposition}[maintheorem]{Proposition}
\newtheorem{mainquestion}[maintheorem]{Question}
\newcommand{\M}{{\mathrm{D}}}
\newcommand{\Z}{{\mathbb Z}}
\newcommand{\F}{{\mathbb F}}
\newcommand{\GEN}[1]{\left\langle #1 \right\rangle}
\newcommand{\aug}[1]{\mathrm{I}(#1)}
\newcommand{\augNor}[2]{\mathrm{I}(#1;#2)}
\newcommand{\ZZ}{\mathrm{Z}}
\newcommand{\qand}{\quad \text{and} \quad}
\DeclareMathOperator{\dG}{d}
\title[The modular isomorphism problem and abelian direct factors]{The modular isomorphism problem and abelian direct factors}
\author{Diego Garc\'{\i}a-Lucas }
 \address{Departamento de Matem\'aticas, Universidad de Murcia, Spain}
 \email{diego.garcial@um.es}
 \thanks{The  author   is partially supported by Grant PID2020-113206GB-I00 funded by MCIN/AEI/10.13039/501100011033. }
\keywords{Finite $p$-groups, modular group algebra, invariants, modular isomorphism problem.}
\subjclass{20D15}
\date{\today}
\begin{document}

\begin{abstract}
Let $p$ be a   prime and let $G$ be a finite $p$-group. We show that the isomorphism type of the maximal abelian direct factor of $G$, as well as the isomorphism type   of the group algebra over $\F_p$ of the non-abelian remaining direct factor, are determined by $\F_p G$, generalizing the main result in \cite{MSS21} over the prime field. In order to do this, we address the problem of finding characteristic subgroups of $G$ such that their relative augmentation ideals depend only on the $k$-algebra structure of $kG$, where $k$ is any field of characteristic $p$, and relate it to the modular isomorphism problem,   extending and reproving some known results.

\end{abstract}

\maketitle

\section{Introduction}

Let $k$ be a field of characteristic $p$, and $G$ and $H$ finite $p$-groups. The {}\emph{modular isomorphism problem} (MIP) asks whether the existence of an isomorphism of $k$-algebras $kG\cong kH$  implies the existence of an isomorphism of groups $G\cong H$. The most classical version of this question also assumes that $k=\F_p$, the field with $p$ elements. Indeed, it is already   mentioned by Brauer in \cite{Bra63}  
 as a   possibly   \emph{much easier} 
 particular case of the general isomorphism problem for group rings (\cite[Problem 2]{Bra63}). Despite Brauer's optimistic observation,   only  partial positive results for MIP have been obtained  under   quite severe  restrictions on the structure of $G$ and $H$. For example, MIP is known to have positive answer if the groups are abelian (Deskins  \cite{Deskins1956}), metacyclic (Bagi\'{n}ski  \cite{BaginskiMetacyclic}, and Sandling  \cite{San96}),  or have class $2$ and   elementary abelian derived subgroup (Sandling  \cite{San89}). Some more recent positive results and  approaches can be found in \cite{Sakurai,MM20,BdR20,MS22}, and an up to date state of the art, in \cite{Mar22}.    The modular isomorphism problem  is now known to have negative answer in general, as it is shown in \cite{GarciaMargolisdelRio} that there exist  non-isomorphic finite $2$-groups with isomorphic group algebras over every field of characteristic $2$. This counterexample makes it even more interesting to investigate which properties (weaker than the isomorphism type) of $G$ and $H$ must necessarily coincide when $kG\cong kH$. In any case, the original problem remains open for $p$ an odd prime.

On the other hand, and despite all the attention MIP received, an approach that surprisingly seems    to not have been exploited until very recently is to reduce the problem from the vast class of all finite $p$-groups to some smaller (but maybe as vast and complicated) subclass of groups.   In \cite{MSS21}  it is shown that the modular isomorphism problem  can be reduced to the same problem over groups without elementary abelian direct factor. We generalize this result by dropping the `elementary' hypothesis, i.e., showing that MIP can be reduced to the problem over groups without abelian  direct factors, with no restrictions on the exponent. 
 We formalize this as follows:  
For a finite $p$-group $G$, consider a decomposition $G=\text{\rm Ab}(G)\oplus \text{\rm{NAb}(G)}$, where $\text{\rm Ab}(G)$ and $ \text{\rm{NAb}(G)}$ are subgroups of $G$ such that $\text{\rm Ab}(G)$ is abelian, and maximal such that a decomposition like that exists. From the Krull-Remak-Schmidt theorem it follows that the isomorphism types of
$\text{\rm Ab}(G)$ and $ \text{\rm{NAb}(G)}$ do not depend on the chosen decomposition, so they are group-theoretical invariants of $G$. Our main theorem states that we can disregard the direct factor $\text{Ab}(G)$ in the study of MIP. Formally:
\begin{maintheorem}\label{maintheorem} Let $k=\F_p$ and   $G$ and $H$ be finite $p$-groups. Then  
	\begin{equation*}
	kG\cong kH\qquad \text{if and only if}\qquad 	k\left(\textrm{\rm NAb}(G)\right)\cong k\left( \textrm{\rm NAb}(H)\right) \qand  \textrm{\rm Ab}(G)\cong\textrm{\rm Ab}(H).
	\end{equation*}
\end{maintheorem}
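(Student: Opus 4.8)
The plan is to treat the two implications separately, the backward one being essentially formal and the forward one carrying all the weight. For the backward direction I would use that the group algebra turns direct products into tensor products: writing $G=\textrm{\rm Ab}(G)\oplus \textrm{\rm NAb}(G)$ gives a $k$-algebra isomorphism $kG\cong k(\textrm{\rm Ab}(G))\otimes_k k(\textrm{\rm NAb}(G))$, and likewise for $H$. Given $\textrm{\rm Ab}(G)\cong \textrm{\rm Ab}(H)$ and $k(\textrm{\rm NAb}(G))\cong k(\textrm{\rm NAb}(H))$, tensoring the two isomorphisms yields $kG\cong kH$.

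For the forward direction, fix a $k$-algebra isomorphism $kG\cong kH$ and set $A=\textrm{\rm Ab}(G)$, $N=\textrm{\rm NAb}(G)$, $B=\textrm{\rm Ab}(H)$, $M=\textrm{\rm NAb}(H)$. I would split the task into (i) showing $A\cong B$ as groups and (ii) showing $kN\cong kM$ as $k$-algebras. For (i) my first instinct is to recover $A$ from group-theoretic data that is invariant under $kG\cong kH$: the abelianization $G/G'\cong A\times(N/N')$ is recovered as a group from the largest commutative quotient of $kG$ together with Deskins' theorem, so one is left to separate the genuine direct factor $A$ from the ``accidental'' abelian quotient $N/N'$. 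Since a maximal abelian direct factor is not characteristic (distinct complements of $A$ are generally permuted by $\Aut(G)$), I cannot realize $A$ as $G/C$ for a characteristic $C$; instead I would express the isomorphism type of $A$ through the orders of characteristic subgroups such as $G'$, $\Phi(G)$, the subgroups $\langle g^{p^i}:g\in G\rangle$ and their mutual intersections, all of which are invariants of $kG$, and prove a purely group-theoretic lemma that these numbers pin down the type of $A$.

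The core of the argument, and where the relative augmentation ideals of the abstract enter, is part (ii). Having identified $A\cong B$, I would write $kG\cong kA\otimes_k kN$ and $kH\cong kA\otimes_k kM$ and try to \emph{cancel} the commutative tensor factor, i.e.\ to prove that $kA\otimes_k kN\cong kA\otimes_k kM$ forces $kN\cong kM$ when $N,M$ have no abelian direct factor. By the Krull--Remak--Schmidt theorem I may reduce to cancelling a single cyclic factor $A=C_{p^n}$, so that $kA\cong k[t]/(t^{p^n})$ and the claim becomes a cancellation statement for a truncated-polynomial extension, $kN[t]/(t^{p^n})\cong kM[t]/(t^{p^n})\Rightarrow kN\cong kM$. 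To carry this out canonically I would look for a characteristic subgroup whose relative augmentation ideal $\augNor{G}{\,\cdot\,}$ is determined purely by the $k$-algebra structure of $kG$ and whose quotient isolates $kN$, and then prove that the no-abelian-direct-factor hypothesis makes this ideal insensitive to the choice of complement of $A$.

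I expect the main obstacle to be precisely this cancellation in the non-elementary case, i.e.\ when $A$ has elements of order larger than $p$; this is exactly what goes beyond \cite{MSS21}, where $kA$ is a tensor product of copies of $k[t]/(t^p)$ and the ``free'' generators of the abelian factor are already detected in the lowest graded layers $\aug{G}^{2}/\aug{G}^{3}$ of the augmentation ideal. For higher exponent I cannot read off the cyclic direct factor from a single bounded layer, since a generator of a $C_{p^n}$-factor survives through degrees up to $p^{n}-1$ before meeting a relation; I would instead have to track the interaction of the $p$-power map with the augmentation filtration, controlling how such a generator and its $p$-power images sit inside the powers of $\aug{G}$, and show that this data is simultaneously algebra-determined and independent of the chosen complement of $A$.
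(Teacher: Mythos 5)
Your overall skeleton (backward direction via $kG\cong k(\mathrm{Ab}(G))\otimes_k k(\mathrm{NAb}(G))$, forward direction split into recovering $\mathrm{Ab}(G)$ and then isolating the non-abelian part) matches the paper's at the coarsest level, and your closing paragraph correctly locates the difficulty in the interaction of the $p$-power map with the augmentation filtration. But both halves of your forward direction have genuine gaps. For (i), you assert that the orders of $\mho_i(G)$, $\Omega_i(G)$ and their mutual intersections ``are invariants of $kG$''; this is not known and is not something you can take for granted. The known canonical data all involve subgroups containing $G'$ (such as $\mho_t(G)G'$ or $\Omega_t(G:G')$) or carefully chosen products with pieces of the center; the center itself is \emph{not} canonical (the Bagi\'nski--Kurdics example cited in the paper), which is precisely why the paper has to work with the assignations $G\mapsto \Omega_t(\ZZ(G))\mho_t(G)G'$ and $G\mapsto \mho_t(G)G'$ produced by the Transfer Lemma, and then prove the nontrivial group-theoretic identity $H_t(G)\cong H_t\bigl(\Omega_t(\ZZ(G))\mho_t(G)G'/\mho_t(G)G'\bigr)$ (via a delicate Burnside-basis complement lemma). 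Your ``purely group-theoretic lemma that these numbers pin down the type of $A$'' is neither stated precisely nor proved, and since $\mathrm{Ab}(G)$ lives inside $\ZZ(G)$, it is very doubtful that center-free order data can detect it.

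For (ii), your plan contains an internal tension you yourself point out: you propose to find a characteristic subgroup whose relative augmentation ideal is algebra-determined and ``whose quotient isolates $kN$,'' yet you observe in (i) that complements of $\mathrm{Ab}(G)$ are not characteristic, so no such canonical ideal can exist. Moreover the cancellation statement you reduce to, $kN[t]/(t^{p^n})\cong kM[t]/(t^{p^n})\Rightarrow kN\cong kM$, is left entirely unproven and is essentially equivalent in strength to the theorem itself (note also that iterating it over the cyclic factors of $A$ forces you to apply it to groups that \emph{do} have abelian direct factors, so the no-abelian-factor hypothesis is unavailable at intermediate steps). The paper avoids cancellation altogether: for each homocyclic decomposition $\mathfrak d$ of $G$ it transports the \emph{non-canonical} ideal $\augNor{H_t^{\mathfrak d}(G)}{G}$ through $\phi$ and proves (using the canonicity of $\augNor{\Omega_t(\ZZ(G))G'}{G}$ and $\augNor{\mho_t(G)G'}{G}$, a linear-algebra characterization of $\augNor{H_t^{\mathfrak d}(G)}{G}+\aug{G}^2$ in terms of $\ker\Lambda_G^{t-1}$, and an ideal-complement lemma from \cite{MSS21}) that the image ideal admits a vector-space complement of the form $kS$ with $H=S\oplus H_t^{\bar{\mathfrak d}}(H)$; peeling off one homocyclic component per exponent then yields $k(\mathrm{NAb}(G))\cong k(\mathrm{NAb}(H))$. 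This is also where $k=\F_p$ enters (the map $\psi_1:G/\Phi(G)\to \aug{G}/\aug{G}^2$ is an isomorphism only over the prime field), a restriction your proposal never engages with.
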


As an immediate corollary, we can extend non-trivially some of the classes of groups for which MIP is known to have a positive answer. We summarize some of them in the following corollary.

\begin{maincorollary}\label{maincorollary}
	Let $k=\F_p$ and $A$ and $G$ be finite $p$-groups such that $A$ is abelian and at least one of the following holds:\begin{enumerate}
		\item \label{maincorollary1} $G$ is metacyclic.
		\item $G$ is $2$-generated of nilpotency class $2$.
		\item $G$ is $2$-generated with nilpotency class $3$ with elementary abelian derived subgroup.
		\item $G$ is elementary abelian-by-cyclic.
		\item $G$ has a cyclic subgroup of index $p^2$.
		\item The third term of the Jennings series of $G$ is trivial.
		\item \label{maincorollary7}  The fourth term of the Jennings series of $G$ is trivial and $p>2$. 
		\item \label{maincorollary8} $G$ has order at most $p^5$.
	\end{enumerate}
If $H$ is another group such that $kH\cong k(G\oplus A)$, then $H\cong G\oplus A$.
\end{maincorollary}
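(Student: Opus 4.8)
The plan is to derive this directly from \Cref{maintheorem} together with the uniqueness in the Krull--Remak--Schmidt theorem and the known positive answers to MIP for each of the listed classes. Assume $kH\cong k(G\oplus A)$. The first step is to identify the abelian and non-abelian factors of $G\oplus A$: writing $G=\mathrm{Ab}(G)\oplus\mathrm{NAb}(G)$ we have $G\oplus A=(\mathrm{Ab}(G)\oplus A)\oplus\mathrm{NAb}(G)$, and since $\mathrm{Ab}(G)\oplus A$ is abelian while $\mathrm{NAb}(G)$ has no abelian direct factor, the uniqueness part of Krull--Remak--Schmidt yields $\mathrm{Ab}(G\oplus A)\cong\mathrm{Ab}(G)\oplus A$ and $\mathrm{NAb}(G\oplus A)\cong\mathrm{NAb}(G)$.

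Applying \Cref{maintheorem} to the pair $(G\oplus A,\,H)$, the hypothesis $kH\cong k(G\oplus A)$ gives
\begin{equation*}
k\big(\mathrm{NAb}(H)\big)\cong k\big(\mathrm{NAb}(G)\big)\qand \mathrm{Ab}(H)\cong\mathrm{Ab}(G)\oplus A .
\end{equation*}
The second isomorphism already pins down the abelian factor of $H$. Thus the only remaining task is to upgrade the algebra isomorphism $k(\mathrm{NAb}(H))\cong k(\mathrm{NAb}(G))$ to a group isomorphism $\mathrm{NAb}(H)\cong\mathrm{NAb}(G)$; once this is achieved, combining the two gives $H=\mathrm{Ab}(H)\oplus\mathrm{NAb}(H)\cong(\mathrm{Ab}(G)\oplus A)\oplus\mathrm{NAb}(G)\cong G\oplus A$, which is the assertion.

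To perform that upgrade I would, for each item, invoke the cited positive solution of MIP for the corresponding class, after checking that $\mathrm{NAb}(G)$ still belongs to it. Since $\mathrm{NAb}(G)$ is a direct factor of $G$, hence both a subgroup and a quotient, this reduces to verifying that each class is inherited by direct factors. This is routine in almost every case: subgroups of metacyclic $p$-groups are metacyclic, giving (1); the minimal number of generators and the nilpotency class do not increase on a direct factor and $(\mathrm{NAb}(G))'$ embeds in $G'$, covering (2) and (3); intersecting with the elementary abelian normal subgroup and using that subgroups of cyclic groups are cyclic shows that a direct factor of an (elementary abelian)-by-cyclic group is again of this form, giving (4); the Jennings series commutes with direct products, so the relevant term stays trivial, giving (6) and (7); and (8) is immediate because $|\mathrm{NAb}(G)|$ divides $p^5$. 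In each such case the hypothesis persists in $\mathrm{NAb}(G)$, and the quoted result forces $\mathrm{NAb}(H)\cong\mathrm{NAb}(G)$.

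Since the authors call this an immediate corollary, I do not expect a genuine obstacle; the only point requiring a small argument is (5). There one must check that a direct factor $D$ of a group $G$ with a cyclic subgroup $C$ of index $p^2$ again has a cyclic subgroup of index at most $p^2$: projecting $C$ to $D$ along the complementary factor produces a cyclic subgroup whose index equals $|G:CE|$, which divides $p^2$. One then notes that the intermediate cases of index $1$ or $p$ are themselves covered by available MIP results (the index‑$1$ case being abelian, hence excluded for the non-abelian factor, and the index‑$p$ case being classical). A similarly minor check is that in (2) and (3) the degenerate possibility $\mathrm{NAb}(G)=1$, or a drop in nilpotency class, is harmless. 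Beyond these bookkeeping points, everything follows from \Cref{maintheorem} and the Krull--Remak--Schmidt decomposition.
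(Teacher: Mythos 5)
Your proposal is correct and takes essentially the same route as the paper: identify $\mathrm{Ab}(G\oplus A)\cong \mathrm{Ab}(G)\oplus A$ and $\mathrm{NAb}(G\oplus A)\cong \mathrm{NAb}(G)$ via Krull--Remak--Schmidt, apply \Cref{maintheorem}, observe that each listed property is inherited by $\mathrm{NAb}(G)$, and then invoke the known MIP cases of \Cref{prop:knowncases} to upgrade $k(\mathrm{NAb}(G))\cong k(\mathrm{NAb}(H))$ to a group isomorphism. Your explicit inheritance checks (in particular the index computation $|D:\pi(C)|=|G:CE|$ for item (5) and the handling of the degenerate cases) merely flesh out what the paper's one-sentence proof leaves implicit, and they are accurate.
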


Observe that none of these properties is closed under taking direct products with abelian groups, so our result is indeed non-trivial. The primality of the field is important in the proof of \Cref{maintheorem}; however, for an arbitrary field of characteristic $p$ we can still recover the isomorphism type of the maximal abelian direct factor.
\begin{mainproposition}\label{mainprop}
	Let $G$ and $H$ be finite $p$-groups and $k$ be a field of characteristic $p$. Then $kG\cong kH$ implies that $\text{Ab}(G)\cong \text{Ab}(H)  $.
\end{mainproposition}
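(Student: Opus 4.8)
The plan is to show that the isomorphism type of $\mathrm{Ab}(G)$ is an invariant of the $k$-algebra $kG$; since $kG\cong kH$, the conclusion follows at once. Write $A=\mathrm{Ab}(G)$ and $N=\mathrm{NAb}(G)$, so that $kG\cong kA\otimes_k kN$, and the task is to recover the abelian tensor factor $kA$ — equivalently the group $A$ — from $kG$ alone. A first, cheap, reduction recovers the full abelianization: the two-sided ideal generated by the additive commutators $\{xy-yx:x,y\in kG\}$ is preserved by every $k$-algebra automorphism, and the associated quotient is the maximal commutative quotient of $kG$, which for a group algebra is exactly $k[G^{\mathrm{ab}}]$. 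Since the modular isomorphism problem has a positive answer for abelian $p$-groups over any field of characteristic $p$ (a classical fact; the type of $A$ is read off from the Frobenius images $\{x^{p^i}:x\in kA\}\cong k[A^{p^i}]$ and their dimensions), the algebra $k[G^{\mathrm{ab}}]$ determines $G^{\mathrm{ab}}$ as a group, and hence $G^{\mathrm{ab}}\cong H^{\mathrm{ab}}$. This is \emph{not} yet sufficient: since $N^{\mathrm{ab}}$ may be a nontrivial abelian group, the abstract isomorphism type of $G^{\mathrm{ab}}=A\times N^{\mathrm{ab}}$ does not single out the subgroup $A$.

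The decisive ingredient is therefore a purely group-theoretic description of $\mathrm{Ab}(G)$ in terms of characteristic data of $G$ that can afterwards be matched with invariants of $kG$. For $n\ge 1$ let $a_n(G)$ be the number of cyclic direct factors of order $p^n$ in $\mathrm{Ab}(G)$; by the Krull--Remak--Schmidt theorem the sequence $(a_n(G))_{n\ge1}$ determines $\mathrm{Ab}(G)$ up to isomorphism. I would first prove that each $a_n(G)$ equals the $\F_p$-dimension of a section built from characteristic subgroups of $G$ — the center $Z(G)$, the Frattini subgroup, the agemo subgroups $\mho^i(G)$ and the derived subgroup $G'$ — the principle being that a central element spans a cyclic direct factor of order $p^n$ exactly when its image survives, with the correct order, in the appropriate central section, and that the central elements contributed by $N$ are precisely those absorbed into $G'$ and into the higher $p$-power structure. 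Isolating elementary abelian direct factors, i.e. the case $n=1$, is the content of \cite{MSS21}; the genuinely new point is to treat all exponents $p^n$ simultaneously.

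The final step, where I expect the main obstacle to lie, is to show that each of these characteristic sections is detected by the $k$-algebra structure of $kG$. The natural tool is the relative augmentation ideal: if $M\le G$ is characteristic and $\ker\!\left(kG\to k[G/M]\right)$ can be shown to depend only on the $k$-algebra $kG$, then $k[G/M]$, and hence (via the abelian case once more) the relevant section, is an invariant. I would accordingly establish, over an arbitrary field of characteristic $p$, that the augmentation ideals relative to the characteristic subgroups occurring in the formulas for the $a_n(G)$ are algebra-determined, and then read off $(a_n(G))_{n\ge1}$, hence $\mathrm{Ab}(G)$, from $kG$. The hard part is exactly this invariance statement: whereas the dimension (Jennings) subgroups and $k[G^{\mathrm{ab}}]$ are visibly intrinsic to the radical filtration, the sections governing the higher cyclic factors involve $Z(G)$ and the agemo subgroups $\mho^i(G)$ for $i\ge1$, which are not filtration subgroups; and, in contrast with \Cref{maintheorem}, the field need not be prime, so the arguments specific to $\F_p$ are unavailable and the control of these sections for $n\ge2$ must be obtained by ring-theoretic means internal to $kG$.
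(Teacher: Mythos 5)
Your overall architecture coincides with the paper's: first express $\textrm{Ab}(G)$ through sections of $G$ built from $\ZZ(G)$, the agemo subgroups and $G'$, then show those sections are invariants of the $k$-algebra $kG$ via relative augmentation ideals. But both pivotal steps are left as intentions, and they are precisely where all the difficulty lies, so there are genuine gaps. On the group-theoretic side, you never state, let alone prove, the identity your plan needs. The paper's version (\Cref{lemma:HPiOmega,lemma:AbGDecomposition}) is that $H_t(G)\cong H_t\bigl(\Omega_t(\ZZ(G))\mho_t(G)G'/\mho_t(G)G'\bigr)$ for every $t$, where $H_t$ denotes the homocyclic component of exponent $p^t$; note that this is \emph{not} an $\F_p$-dimension of a section, since $\Omega_t(\ZZ(G))\mho_t(G)G'/\mho_t(G)G'$ in general also contains images of central elements of $\textrm{NAb}(G)$, so one must take its exponent-$p^t$ homocyclic component rather than count dimensions. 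More importantly, the sufficiency half of your guiding ``principle'' --- that a central subgroup whose image survives with the correct order in the appropriate section actually \emph{splits off as a direct factor of the nonabelian group} $G$ --- is the real content: the paper needs \Cref{lemma:directfactorNuevo}, proved by a recursive correction of a Burnside basis, to show that a subgroup $T\le\Omega_t(\ZZ(G))$ of exponent $p^t$ with $d(T)=d(T\Phi(G)/\Phi(G))$ and $T\cap\mho_t(G)G'=1$ is a direct factor. Your proposal offers no argument for this step, and without it the claimed description of the $a_n(G)$ is unsupported.

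On the algebra side you explicitly defer the invariance statement (``I would accordingly establish\dots''), conceding it is the hard part, so the proposal stops where the proof must begin. The paper settles it with the Transfer Lemma (\Cref{mainlemma}): starting from the classical $\phi(\augNor{G'}{G})=\augNor{H'}{H}$, the $p^t$-power map on the commutative quotient $k(G/G')$ (via \Cref{lemma:abelianppower}) makes $G\mapsto\mho_t(G)G'$ $k$-canonical, and the decomposition $\ZZ(kG)=\aug{\ZZ(G)}\oplus\bigl([kG,kG]\cap\ZZ(kG)\bigr)$ then makes $G\mapsto\Omega_t(\ZZ(G))\mho_t(G)G'$ $k$-canonical; finally \Cref{JenningsNor} converts equality of relative augmentation ideals into a group isomorphism of the abelian sections, because an abelian group is determined by the orders of the terms of its Jennings series. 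All of this works over an arbitrary field of characteristic $p$, so your closing worry that ``arguments specific to $\F_p$'' are unavailable is beside the point: primality of the field enters only in the proof of \Cref{maintheorem}, not of this proposition. In short, your outline points in the same direction as the paper, but neither of its two essential claims is proved, so it is a plan rather than a proof.
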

 
Hence naturally  the following question arises
\begin{mainquestion}
	Let $k$ be a field of characteristic $p$, and $G$ and $H$ be finite $p$-groups.  Does $kG\cong kH$ imply  that 
	$  k( \text{\rm NAb}(G))\cong k(\text{\rm NAb}(H))  $?
	
	\end{mainquestion}

We recall that the \emph{indecomposable decomposition} of a finite group $G$, i.e., the  indecomposable direct factors of $G$ arising from the Krull-Remak-Schmidt theorem, is unique up to isomorphism and reordering. Since the groups in \cite{GarciaMargolisdelRio} are indecomposable, 
 it makes also sense to ask if \Cref{maintheorem} can be extended so that   the modular isomorphism problem is reduced to the same problem over indecomposable finite $p$-groups, carrying the reduction approach (in the sense of direct factor decompositions) to its ideal conclusion. In other words:
 \begin{mainquestion}
 	Let $k=\F_p$ and $G$ and $H$ be finite $p$-groups. Does $kG\cong k H$  imply that the terms of the indecomposable decompositions of $G$ and $H$   have pairwise isomorphic group algebras over $k$?
 \end{mainquestion} 
\Cref{lemma:semidirect2} seems to suggest that this  problem is not completely hopeless; however the remainder of techniques used in \Cref{section:proofs} rely heavily on the fact that the direct factors we disregard are abelian (and hence contained in the center of the group), so new ideas and techniques would be needed to follow this path.  
 
 \vspace*{0.2cm}

The paper is organized as follows. In \Cref{SectionPreliminaries} we set  notation and present some well-known results related to the modular isomorphism problem. In \Cref{section:transfer} we address the problem of finding normal subgroups $N$ of $G$ such that their relative augmentation ideals depend only on the  algebra structure of $kG$, and relate this problem to the MIP.  Finally, in \Cref{section:proofs} we   prove, with the help of the results in the previous section, both \Cref{mainprop} and \Cref{maintheorem}. 

\section{Notation and preliminaries}\label{SectionPreliminaries}

Throughout the paper, $p$ will denote a prime number, $k$ a field of characteristic $p$, $\F_p$ the field with $p$ elements and $G$ and $H$ finite $p$-groups. The  group algebra of $G$ over $k$ is denoted by $kG$ and its augmentation ideal  is denoted by $\aug{G}$. It is a classical result that $\aug{G}$ is also the Jacobson ideal of $kG$.  For every normal subgroup $N$ of $G$, we write $\augNor{N}{G}$ for the relative augmentation ideal $\aug{N}{kG}$,  i.e., the two-sided  ideal generated by the elements of the form $n-1$ with $n\in  N$. It is well-known that this ideal is just the kernel of the natural projection $kG\to k(G/N)$, i.e., the homomorphism of algebras extending the natural projection $G\to G/N$. Moreover, $\augNor{N}{G}\cap (G-1)= N-1$.  For these basic facts about the augmentation ideals we refer to \cite[Section 1.1]{Pas77}.
 We denote by $[kG,kG]$ the vector subspace of $kG$ generated by the elements of the form $xy-yx$, with $x,y\in kG$. Given a subspace $U$ of a vector space $V$ over $k$, we write $\text{\rm codim}_V(U)=\dim(V)-\dim(U)$ to denote the codimension of $U$ in $V$.

Our group theoretic notation  is mostly standard. We write $\oplus$ both for internal and external direct products of groups, and also for the direct sum of vector spaces. For $n \geq 1$, we denote by $C_n$   the cyclic group of order $n$. Given $n,r\geq 1$, a   \emph{homocyclic} group of exponent $n$ and rank $r$ is a group isomorphic to  $(C_{n})^r=C_n\oplus\dots \oplus C_n$ ($r$ times).  We  let $|G|$ denote the order of the finite $p$-group $G$, $\ZZ(G)$ its center, $\{\gamma_i(G)\}_{i\geq 1}$ its lower central series, $G'=\gamma_2(G)$  its commutator subgroup, $\Phi(G)$ its Frattini subgroup and  $\dG(G)=\min \{|X|   :  X\subseteq G \textup{ and } G=\GEN{X}\}$ its minimum number of generators. It is well known that $G/\Phi(G)$ is an elementary abelian $p$-group, and  $d(G)=d(G/\Phi(G))$. If $A$ is a homocyclic $p$-group, then $d(A)$ equals the rank of $A$. If moreover $A$ is  elementary abelian, it can be seen as a vector space  over $\F_p$ with dimension $d(A)$. A \emph{Burnside basis} of $G$ is a minimal set of generators of $G$, i.e., a subset of $G$ such that its image in $G/\Phi(G)$ is a basis as vector space.    We   define the \emph{omega} series $(\Omega_n(G))_{n\geq 0}$ and the \emph{agemo} series $(\mho_n(G))_{n\geq 0}$ of $G$ by:
$$\Omega_n(G)=\GEN{g\in G : g^{p^n}=1} \qand 
\mho_n(G)=\GEN{g^{p^n} : g \in G}.$$   
If $N$ is a normal subgroup of $G$ and $n\geq 1$,   we also write
$$\Omega_n(G:N)=\GEN{g\in G: g^{p^{n}}\in N}, $$ that is, $\Omega_n(G:N)$ is the unique subgroup of $G$ containing $N$ such that 
$ \Omega_n(G:N)/N=\Omega_n(G/N). $ 
   The \emph{Jennings series} $(\M_n(G))_{n\geq 1}$ of $ G$ is defined by 
\begin{equation*}
\label{EqJenningsLazard}
\M_{n}(G) = \{g\in G : g-1\in \aug{G}^n\}= \prod_{ip^j\ge n}   \mho_j(\gamma_i(G)) 
\end{equation*}  In particular $\M_1(G)=G$ and $\M_2(G)=\mho_1(G)G'=\Phi(G)$.  A property of these series that we will use is that if $G$ is abelian, then the orders of the terms completely determine the isomorphism type of $G$.  
 For more on the Jennings series, see for instance \cite[Section~11.1]{Pas77} and \cite[Section~III.1]{Seh78}.

 The next proposition collects some well-known results about MIP that will have some relevance for our results; a complete list  can be consulted in \cite{Mar22}.

 \begin{proposition}\label{prop:knowncases}
 	Let $k=\F_p$ be the field with $p$ elements, and let $G$ and $H$ be finite $p$-groups. Suppose that $G$ satisfies at least one of the following conditions:
 	\begin{enumerate}
 		\item $G$ is abelian \cite{Deskins1956}.
 		\item $ G$ is metacyclic \cite{BaginskiMetacyclic, San96}.
 		\item $G$ is $2$-generated with nilpotency class $2$   \cite{BdR20}.
 			\item $G$ is $2$-generated with elementary abelian derived subgroup and nilpotency class $3$ \cite{MM20,Bag99}.
 			\item $G$ has a cyclic subgroup of index $p^2$ \cite{BK07}.
 		\item $G$ is elementary abelian-by-cyclic \cite{Bag99}.
 		\item $\M_3(G)=1$ \cite{PS72}.
 	\item $\M_4(G)=1$ and $p>2$ \cite{Her07}.
 	\item $|G|\leq p^5$ \cite{Passman1965p4,SalimSandling1995}.
 	\end{enumerate}
 If $kG\cong kH$, then $G\cong H$.
 \end{proposition}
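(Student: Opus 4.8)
Since the proposition gathers results that are already in the literature, the plan is not to supply new arguments but to certify that each item follows from the reference attached to it. Concretely, I would walk down the list: item~(1) is Deskins' theorem \cite{Deskins1956}; item~(2) is the metacyclic case settled by Bagi\'nski \cite{BaginskiMetacyclic} and Sandling \cite{San96}; items~(3)--(6) are the corresponding results of \cite{BdR20}, of \cite{MM20,Bag99}, of \cite{BK07}, and of \cite{Bag99}; and items~(7)--(9) are the Jennings-series and small-order cases of \cite{PS72}, \cite{Her07}, and \cite{Passman1965p4,SalimSandling1995}. In each instance the cited theorem delivers exactly the implication $kG\cong kH\Rightarrow G\cong H$, so no independent computation is required.

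The single point that genuinely needs attention is that the hypotheses of each cited statement match the formulation used here, and in particular that every one of these results is established over the prime field $k=\F_p$. This is not a formality: several of the proofs (for example those relying on counting invariants, or on the structure of $kG/[kG,kG]$) use features special to $\F_p$ and are not known to persist after extending scalars to an arbitrary field of characteristic $p$. I would therefore fix $k=\F_p$ throughout the statement and refrain from phrasing it for general $k$, so as not to over-claim. The main ``obstacle'', such as it is, is thus bookkeeping rather than mathematics: tracking which of the cited sources already phrase their conclusion in the form above and which require a one-line translation.

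A final, purely expository remark is that the listed classes are not disjoint---many groups, especially those of small order, satisfy several of the conditions at once---but since the conclusion $G\cong H$ is the same in every case, these overlaps are harmless. I would keep the list in its redundant form on purpose, as it is meant to function as a ready-made catalogue to be invoked later, most importantly in deducing \Cref{maincorollary} from \Cref{maintheorem}. In that sense the mathematical weight of the paper sits not in this proposition but in showing that a positive answer to MIP for $G$ is inherited by $G\oplus A$, which is precisely where these cases are put to use.
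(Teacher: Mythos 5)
Your proposal matches the paper exactly: Proposition~\ref{prop:knowncases} is given no proof in the text precisely because it is a catalogue of literature results, each item certified by the citation attached to it, with the role you describe of feeding into \Cref{maincorollary} via \Cref{maintheorem}. Your added care about the results being stated over $k=\F_p$ is consistent with the paper's own framing, so there is nothing to correct.
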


  Let $\phi:kG\to kH$ be an isomorphism preserving the augmentation. Then the ideal of $kG$ generated by $[kG,kG]$ is exactly $\augNor{G'}{G}$. As $\phi([kG,kG])=[kH,kH]$, it follows that 
  \begin{equation}
  	\phi(\augNor{G'}{G})=\augNor{H'}{H}. \label{DerivedCanonical}
  \end{equation} 
  This is a classical result that already appears in \cite{Coleman1962}, and plays a role in the proofs of a great part of the known results about MIP (e.g., the ones involving the small group algebra such as \cite{San89}).  
%
%
In  \Cref{section:transfer} we will be  interested in subgroups satisfying the property in \eqref{DerivedCanonical}. More concretely, given a family $\mathcal F$ of groups, we say that a map $N_*:\mathcal F\to \mathcal C$, where $\mathcal C$ is the class of all groups, is a \emph{subgroup assignation}  if given a group $G$ in $\mathcal F$ it  returns a  normal subgroup $N_G$ of $ G$.   Partially following \cite[Section 3.1.2]{SalimTesis}, we say that the subgroup assignation $N_*$ is    \emph{$k$-canonical} over $\mathcal F$, if 
$$\phi\left(\augNor{N_G}{G}\right)=\augNor{N_H}{H}  $$
for $G,H\in \mathcal F$ and every isomorphism $\phi:kG\to kH$ preserving the augmentation. Clearly if a subgroup assignation is $k$-canonical, then the image subgroup is a characteristic subgroup of the original group. Unless stated otherwise, in the rest of this paper all the $k$-canonical assignations will be over the family $\mathcal F$ of all the finite $p$-groups. In the next section we provide specific ways to obtain group-theoretical invariants of $G$ determined by $kG$ from $k$-canonical assignations, and also to obtain new $k$-canonical assignations from the known ones.

\section{The $k$-canonical subgroup problem and a transfer lemma}\label{section:transfer}

The following general fact is widely known, as it is mentioned in the proof of \cite[Lemma 6.10]{Sandling85}. A particular case of  it  is  stated and reproved in \cite[Lemma 2.7]{MSS21} restricted to $L=\ZZ(G)$ and $N=\mho_t(G)G'$ for any positive integer $t$, though their proof works in general. 
 \begin{lemma}\label{lemma:MSS21}
	Let $k$ be a field of characteristic $p $ and $G$ a $p$-group. If $N$ and $L$ are normal subgroups of  $G$, then
	$
		\augNor{L}{G}\cap \aug{N}=\augNor{L\cap N}{N}.$
\end{lemma}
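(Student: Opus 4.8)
The plan is to identify both relative augmentation ideals in the statement as kernels of the natural projection homomorphisms and then to compare them by a diagram chase. Recall from the preliminaries that for a normal subgroup $M\trianglelefteq G$ the ideal $\augNor{M}{G}$ is precisely the kernel of the algebra projection $\pi_{G,M}\colon kG\to k(G/M)$ extending $g\mapsto gM$. In particular $\aug{N}$ is the kernel of the augmentation $kN\to k$, and $\augNor{L\cap N}{N}$ is the kernel of $\pi_{N,L\cap N}\colon kN\to k(N/(L\cap N))$ (note $L\cap N\trianglelefteq N$ since $L,N\trianglelefteq G$); the latter is an ideal of $kN$ contained in $\aug{N}$, because the augmentation of $kN$ factors through $\pi_{N,L\cap N}$. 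Since every element of $\augNor{L}{G}\cap\aug{N}$ already lies in $kN$, it suffices to prove that for $\alpha\in kN$ one has $\alpha\in\augNor{L}{G}$ if and only if $\alpha\in\augNor{L\cap N}{N}$.

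The main device is the commutative square of groups coming from the inclusion $N\hookrightarrow G$. Since $L$ and $N$ are normal, $NL$ is a subgroup of $G$, and the second isomorphism theorem supplies an injection $\iota\colon N/(L\cap N)\cong NL/L\hookrightarrow G/L$, $n(L\cap N)\mapsto nL$. This fits into the commutative diagram of group homomorphisms whose top row is the inclusion $N\hookrightarrow G$, whose bottom row is $\iota$, and whose vertical maps are the canonical projections $N\to N/(L\cap N)$ and $G\to G/L$. Applying the group-algebra functor gives a commutative square of $k$-algebra maps, and the key observation is that $k$ sends an injective group homomorphism to an injective algebra homomorphism: it maps the $k$-basis $N/(L\cap N)$ of $k(N/(L\cap N))$ to the linearly independent subset $\iota\big(N/(L\cap N)\big)$ of $k(G/L)$. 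Hence the induced map $k\iota\colon k(N/(L\cap N))\to k(G/L)$ is injective.

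Now I would chase $\alpha\in kN$ around this square. Its image in $k(G/L)$, obtained via the inclusion $kN\hookrightarrow kG$ followed by $\pi_{G,L}$, equals by commutativity $k\iota$ applied to the image of $\alpha$ under $\pi_{N,L\cap N}$. As $k\iota$ is injective, the image of $\alpha$ in $k(G/L)$ vanishes if and only if its image in $k(N/(L\cap N))$ vanishes; that is, $\alpha\in\augNor{L}{G}$ if and only if $\alpha\in\augNor{L\cap N}{N}$, which is the required equivalence. Intersecting with $\aug{N}$ and using the containment $\augNor{L\cap N}{N}\subseteq\aug{N}$ noted above then yields the claimed equality.

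I expect the only genuinely delicate point to be the injectivity of $k\iota$, together with the well-definedness and commutativity of the square; everything else is the formal identification of relative augmentation ideals with kernels of projections. A purely computational alternative, matching explicit spanning sets of the two ideals inside $kN$, would also work but is messier, so I would favour this diagrammatic argument.
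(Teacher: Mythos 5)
Your proof is correct, and there is in fact no in-paper argument to compare it against: the paper records this lemma as a widely known fact, deferring to the proof of \cite[Lemma 6.10]{Sandling85} and to \cite[Lemma 2.7]{MSS21}, where only the special case $L=\ZZ(G)$, $N=\mho_t(G)G'$ is proved (by an argument said to work in general). So what you have done is supply the missing proof, and your route is the natural one given the paper's preliminaries, which already identify $\augNor{M}{G}$ with the kernel of the projection $kG\to k(G/M)$. All the steps check out: the reduction to showing, for $\alpha\in kN$, that $\alpha\in\augNor{L}{G}$ if and only if $\alpha\in\augNor{L\cap N}{N}$ (legitimate because $\augNor{L\cap N}{N}\subseteq\aug{N}\subseteq kN$); the commutativity of the square built from the inclusion $N\hookrightarrow G$ and the second-isomorphism-theorem embedding $\iota\colon N/(L\cap N)\to G/L$; and the injectivity of $k\iota$, which holds since $\iota$ maps the group basis of $k\left(N/(L\cap N)\right)$ bijectively onto a subset of the group basis of $k(G/L)$. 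Your argument actually establishes the slightly stronger equality $\augNor{L}{G}\cap kN=\augNor{L\cap N}{N}$, and it uses neither finiteness, nor the $p$-group hypothesis, nor any property of the field $k$, consistent with the lemma being a completely general statement about group rings. The computational alternative you mention (matching spanning sets, or equivalently decomposing $kG$ as a free $kN$-module over a transversal of $N$ in $G$) is the other standard way to prove this; your diagrammatic version is cleaner and equally rigorous.
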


All the items in the following lemma seem  to be known, at least for some specific $k$-canonical subgroup assignation (mostly $G\mapsto G'$), but to the best of our knowledge it   had never been stated in general. 
 \begin{lemma}\label{JenningsNor}
	Let $k$ be a field of characteristic $p $, let $G$ and $H$ be  finite $p$-groups and let $L_G$  and $L_H$ be normal subgroups of $G$ and $H$,  respectively. Assume that there is an isomorphism $\phi:kG\to kH$ such that  $ \phi\left(\augNor{L_G}{G} \right)=\augNor{L_H}{H}$. Then
\begin{enumerate}
	\item \label{JenningsNor1} For each $i\geq 1$, one has that $
		\M_{i}(L_G)/\M_{i+1}(L_G) \cong \M_{i}(L_H)/\M_{i+1}(L_H) $.
\item\label{JenningsNor1+1/2} If $L_G$ and $L_H$ are both abelian, then  $
	L_G\cong L_H$.
\item \label{JenningsNor2-1/2}
If $N_\Gamma$ is a subgroup of $\Gamma$ containing $\Gamma'$ for $\Gamma\in \{G,H\}$ such that $\phi(\augNor{N_G}{G})=\augNor{N_H}{H}$, then $ G/L_GN_G\cong G/L_HN_H $ and $ L_GN_G/N_G\cong L_HN_H/N_H$. 
\item\label{JenningsNor2}   $
 G/L_GG'\cong H/L_HH'$ and $
  	L_GG'/G'\cong L_HH'/H'$. 
\item \label{JenningsNor3} $\ZZ(G)\cap L_G\cong \ZZ(H)\cap L_H$ and $ \ZZ(G)L_G/L_G\cong \ZZ(H)L_H/L_H$.
\end{enumerate} 
\end{lemma}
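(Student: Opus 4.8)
My plan is to take part~\eqref{JenningsNor1} as the technical heart and to deduce parts \eqref{JenningsNor1+1/2}--\eqref{JenningsNor3} from it, using only \Cref{lemma:MSS21}, Coleman's identity~\eqref{DerivedCanonical}, and the fact recorded in \Cref{SectionPreliminaries} that an abelian $p$-group is determined by the orders of the terms of its Jennings series. Two observations are free from the start: since $\phi$ is an algebra isomorphism, $\phi(\augNor{L_G}{G}^n)=\augNor{L_H}{H}^n$ for every $n$, and $\dim_k kG/\augNor{L_G}{G}=\dim_k k(G/L_G)=[G:L_G]$ is preserved, so $[G:L_G]=[H:L_H]$. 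For \eqref{JenningsNor1} the key point is to recover the \emph{intrinsic} $\aug{L_G}$-adic filtration of $kL_G$ from the powers of the relative augmentation ideal. Because $L_G\trianglelefteq G$ we have $\augNor{L_G}{G}=kG\cdot\aug{L_G}=\aug{L_G}\cdot kG$, and since $kG$ is free as a right $kL_G$-module on a transversal $T$ of $L_G$ in $G$, sliding the factors past one another yields $\augNor{L_G}{G}^n=kG\cdot\aug{L_G}^n=\bigoplus_{t\in T} t\,\aug{L_G}^n$. Taking successive quotients gives
\[
\dim_k\frac{\augNor{L_G}{G}^n}{\augNor{L_G}{G}^{n+1}}=[G:L_G]\cdot\dim_k\frac{\aug{L_G}^n}{\aug{L_G}^{n+1}}.
\]
As $\phi$ preserves the left-hand side and the indices coincide, I cancel $[G:L_G]=[H:L_H]$ to obtain $\dim_k\aug{L_G}^n/\aug{L_G}^{n+1}=\dim_k\aug{L_H}^n/\aug{L_H}^{n+1}$ for all $n$. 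Jennings' theorem \cite[Section~11.1]{Pas77} then recovers the ranks $\dim_{\F_p}\M_n(L_G)/\M_{n+1}(L_G)$ from these dimensions, and since the Jennings quotients are elementary abelian, equal ranks give the asserted isomorphism.

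Part \eqref{JenningsNor1+1/2} is then immediate: when $L_G$ and $L_H$ are abelian, \eqref{JenningsNor1} gives $|\M_n(L_G)|=|\M_n(L_H)|$ for all $n$, and an abelian $p$-group is determined by these orders, so $L_G\cong L_H$. Taking $L_\Gamma=\Gamma$ in \eqref{JenningsNor1+1/2} reproves, as a special case, that isomorphic group algebras of abelian $p$-groups force the groups to be isomorphic, and I will use \eqref{JenningsNor1+1/2} in this form below. For \eqref{JenningsNor2-1/2} I first invoke the standard identity $\augNor{MN}{G}=\augNor{M}{G}+\augNor{N}{G}$, which together with the two hypotheses gives $\phi(\augNor{L_GN_G}{G})=\augNor{L_HN_H}{H}$. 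Since $N_G\supseteq G'$, the group $G/L_GN_G$ is abelian, and the induced isomorphism $k(G/L_GN_G)\cong k(H/L_HN_H)$ yields $G/L_GN_G\cong H/L_HN_H$ by \eqref{JenningsNor1+1/2}. For the second isomorphism I pass to the isomorphism $k(G/N_G)\cong k(H/N_H)$ induced by $\phi$; the image of $\augNor{L_G}{G}$ under $kG\to k(G/N_G)$ is $\augNor{L_GN_G/N_G}{G/N_G}$, which is carried to $\augNor{L_HN_H/N_H}{H/N_H}$, and as $G/N_G$ is abelian \eqref{JenningsNor1+1/2} applied inside $k(G/N_G)$ gives $L_GN_G/N_G\cong L_HN_H/N_H$. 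Part \eqref{JenningsNor2} is the case $N_\Gamma=\Gamma'$ of \eqref{JenningsNor2-1/2}, where the hypothesis $\phi(\augNor{G'}{G})=\augNor{H'}{H}$ is supplied by~\eqref{DerivedCanonical}.

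For \eqref{JenningsNor3} I will use the $k$-canonicity of the center, namely $\phi(k\ZZ(G))=k\ZZ(H)$ (equivalently $\phi(\augNor{\ZZ(G)}{G})=\augNor{\ZZ(H)}{H}$), a known result. Restricting $\phi$ to the subalgebra $k\ZZ(G)$ and intersecting with $\augNor{L_G}{G}$, \Cref{lemma:MSS21} (with $L=L_G$, $N=\ZZ(G)$) gives $\augNor{L_G}{G}\cap\aug{\ZZ(G)}=\augNor{\ZZ(G)\cap L_G}{\ZZ(G)}$, which is mapped to $\augNor{\ZZ(H)\cap L_H}{\ZZ(H)}$; since $\ZZ(G)$ and $\ZZ(H)$ are abelian, \eqref{JenningsNor1+1/2} applied to the restricted isomorphism yields $\ZZ(G)\cap L_G\cong\ZZ(H)\cap L_H$. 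For the second isomorphism, the sum identity and center-canonicity give $\phi(\augNor{\ZZ(G)L_G}{G})=\augNor{\ZZ(H)L_H}{H}$; projecting to $k(G/L_G)\cong k(H/L_H)$ and applying \eqref{JenningsNor1+1/2} to the abelian images $\ZZ(G)L_G/L_G$ and $\ZZ(H)L_H/L_H$ completes the proof.

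I expect the main obstacle to be part \eqref{JenningsNor1}. The naive filtration $\augNor{L_G}{G}\cap\aug{G}^n$ only sees the dimension subgroups of $L_G$ relative to the Jennings filtration of the ambient $G$, which is \emph{not} the intrinsic Jennings series of $L_G$; the decisive step is therefore to pass to the powers of the relative augmentation ideal and exploit the freeness of $kG$ over $kL_G$ to isolate the intrinsic $\aug{L_G}$-adic filtration up to the constant factor $[G:L_G]$. A secondary, external dependency is that \eqref{JenningsNor3} rests on the $k$-canonicity of the center rather than on the hypothesis alone.
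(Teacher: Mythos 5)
Parts \eqref{JenningsNor1}--\eqref{JenningsNor2} of your proposal are correct. Your proof of \eqref{JenningsNor1}, via $\augNor{L_G}{G}^n=kG\cdot\aug{L_G}^n=\bigoplus_{t\in T}t\,\aug{L_G}^n$, cancellation of the index $[G:L_G]=[H:L_H]$, and Jennings' theorem, is the standard argument behind \cite[Lemma~6.26]{Sandling85}, which the paper simply cites rather than reproves; your deductions of \eqref{JenningsNor1+1/2}, \eqref{JenningsNor2-1/2} and \eqref{JenningsNor2} coincide with the paper's (sum of relative augmentation ideals, passage to $k(G/N_G)$, and the case $N_\Gamma=\Gamma'$ via \eqref{DerivedCanonical}).

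The genuine gap is in part \eqref{JenningsNor3}. You base it on ``the $k$-canonicity of the center, namely $\phi(k\ZZ(G))=k\ZZ(H)$ (equivalently $\phi(\augNor{\ZZ(G)}{G})=\augNor{\ZZ(H)}{H}$), a known result.'' This is false, and the paper itself says so explicitly: by Bagi\'{n}ski and Kurdics \cite[Example 2.1]{BK19} there is a group $G$ of order $81$ and maximal class admitting an augmentation-preserving automorphism $\phi:\F_3G\to\F_3G$ with $\phi\left(\augNor{\ZZ(G)}{G}\right)\neq\augNor{\ZZ(G)}{G}$; this is precisely the paper's motivating example of a natural subgroup assignation that fails to be $k$-canonical. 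The subalgebra version fails as well, since for an augmentation-preserving $\phi$ the equality $\phi(k\ZZ(G))=k\ZZ(H)$ would force $\phi\left(\augNor{\ZZ(G)}{G}\right)=\phi\left(\aug{\ZZ(G)}\,kG\right)=\aug{\ZZ(H)}\,kH=\augNor{\ZZ(H)}{H}$, contradicting the same example. What is actually true, and what the paper uses, is weaker: $\phi$ preserves the center $\ZZ(kG)$ \emph{of the algebra} and the subspace $[kG,kG]\cap\ZZ(kG)$, which by \eqref{CenterDecomposition} is an ideal complement of $\aug{\ZZ(G)}$ in $\ZZ(kG)\cap \aug{G}$; hence $\phi$ \emph{induces} (but does not restrict to) an isomorphism $\tilde\phi:\aug{\ZZ(G)}\to\aug{\ZZ(H)}$. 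Your correct application of \Cref{lemma:MSS21}, giving $\augNor{L_G}{G}\cap\aug{\ZZ(G)}=\augNor{\ZZ(G)\cap L_G}{\ZZ(G)}$, must then be pushed through this induced map: since $\phi$ preserves $\ZZ(kG)\cap\augNor{L_G}{G}$ (by centrality and the hypothesis on $L_*$), and modulo $[kG,kG]\cap\ZZ(kG)$ this intersection projects exactly onto $\augNor{\ZZ(G)\cap L_G}{\ZZ(G)}$, one gets $\tilde\phi\left(\augNor{\ZZ(G)\cap L_G}{\ZZ(G)}\right)=\augNor{\ZZ(H)\cap L_H}{\ZZ(H)}$, and then \eqref{JenningsNor1+1/2} yields $\ZZ(G)\cap L_G\cong\ZZ(H)\cap L_H$. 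For the second isomorphism your projection to $k(G/L_G)$ again presupposes center-canonicity; instead one uses that $\tilde\phi$ induces $k\left(\ZZ(G)/\ZZ(G)\cap L_G\right)\cong k\left(\ZZ(H)/\ZZ(H)\cap L_H\right)$ together with $\ZZ(G)L_G/L_G\cong\ZZ(G)/(\ZZ(G)\cap L_G)$.
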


\begin{proof}   \eqref{JenningsNor1} is proven for $k=\F_p$ in \cite[Lemma~6.26]{Sandling85}, although  the proof also works for an arbitrary field $k$ of characteristic $p$. See also the proof of \cite[Lemma~2]{BaginskiMetacyclic} for $N=G'$.  \eqref{JenningsNor1+1/2} is an immediate consequence of \eqref{JenningsNor1}, since the isomorphism type of an abelian group is determined by the orders of the terms of its Jennings series.  As for \eqref{JenningsNor2-1/2}, 
	let $\phi:kG\to kH$ an isomorphism of $k$-algebras. Then
	$$\phi\left(\augNor{L_GN_G}{G}\right)=\phi(\augNor{L_G}{G}+\augNor{N_G}{G})=\augNor{L_H}{H}+\augNor{N_H}{H} =\augNor{L_HN_H}{H}.$$
	This implies that $\phi$ induces an isomorphism $k(G/L_GN_G)\cong k(H/L_HN_H)$, thus the  isomorphism $G/L_GN_G\cong H/L_HN_H$ follows from \eqref{JenningsNor1+1/2} because both groups are abelian, as $\Gamma'\subseteq N_\Gamma$. 
	For the last isomorphism in \eqref{JenningsNor2-1/2}, observe that
	$\phi$ also induces an isomorphism $\hat\phi :k(G/N_G)\to k(H/N_H)$. Moreover, if $\pi_{N_\Gamma}:k\Gamma\to k(\Gamma/N_\Gamma)$ is the natural projection, then $\pi_{N_\Gamma}\left(\augNor{L_\Gamma N_\Gamma }{\Gamma}\right)=\augNor{L_\Gamma N_\Gamma/N_\Gamma}{\Gamma/N_\Gamma}$. Thus $\hat \phi(\augNor{L_GN_G/N_G}{G/N_G})=\augNor{L_HN_H/N_H}{H/N_H}$. Then applying item \eqref{JenningsNor1} it follows that $$\M_{i}(L_G N_G/N_G)/\M_{i+1}(L_GN_G/ N_G) \cong \M_{i}(L_HN_H/N_H)/\M_{i+1}(L_HN_H/N_H) $$ for each $i\geq 1$. Hence, as the groups $L_GN_G/N_G$ and $L_HN_H/N_H$ are abelian, they must be isomorphic. Now \eqref{JenningsNor2} follows immediately using \eqref{DerivedCanonical}. 
	
	The underlying idea behind \eqref{JenningsNor3} is once again the proof of \cite[Lemma 6.10]{Sandling85}, which states the same result for $L_\Gamma=\Gamma'$; similar ideas are used in \cite{MSS21} to prove the result specialized to $L_G=\mho_t(G)G'$, for any positive integer $t$. It is well-known (see \cite[Lemma 6.10]{Sandling85}) that 
\begin{equation}\label{CenterDecomposition} \ZZ(kG)=\aug{\ZZ(G)}\oplus [kG,kG]\cap \ZZ(kG),
\end{equation}
where $[kG,kG]\cap \ZZ(kG)$ is an ideal of $\ZZ(kG)$ and $\aug{\ZZ(G)}$ is a $k$-algebra. Moreover \Cref{lemma:MSS21} for $N=\ZZ(G)$ and $L=L_G$ yields that  
	$  \aug{\ZZ(G)} \cap \augNor{L_G}{G} =\aug{\ZZ(G)\cap L_G}   $. Therefore 
	$$\ZZ(kG)\cap \augNor{L_G}{G}+[kG,kG]\cap\ZZ(kG)= \augNor{\ZZ(G)\cap L_G}{\ZZ(G)}\oplus [kG,kG]\cap \ZZ(kG)  . $$
	Since   $\phi:kG\to kH$ maps $[kG,kG]\cap \ZZ(kG) $ to $[kH,kH]\cap \ZZ(kH) $, we deduce that the restriction of $\phi$ to $\ZZ(kG)\cap \aug{G}$ induces an isomorphism $\tilde\phi:\aug{\ZZ(G)}\to \aug{\ZZ(H)}$ such that $ \tilde\phi\left(\augNor{\ZZ(G)\cap L_G}{\ZZ(G)}\right)=\augNor{\ZZ(H)\cap L_H}{\ZZ(H)} $; 
	hence the isomorphism $\ZZ(G)\cap L_G\cong \ZZ(H)\cap L_H$ follows from \eqref{JenningsNor1+1/2}. Moreover $\tilde \phi$ induces an isomorphism $k\left(\ZZ(G)/\ZZ(G)\cap L_G\right)\cong k\left(\ZZ(H)/\ZZ(H)\cap L_H\right)$, so the last isomorphism also follows from \eqref{JenningsNor1+1/2}.
\end{proof}

 \Cref{JenningsNor} gives an idea about how useful to obtain new group-theoretical invariants of the group algebra $kG$ is to find $k$-canonical subgroup assignations over the family of finite $p$-groups. This would be also  an interesting problem by itself, as it seems to be the most natural way to study how the normal subgroup structure of the group is reflected inside the group algebra.  
 
 Let us compare the situation with the case of group algebra with integral coefficients: Let $\Gamma_1$ and $\Gamma_2$ be two finite groups such that there is an isomorphism $\phi:  \Z  \Gamma_1\to \Z  \Gamma_2$ preserving the augmentation. We adopt temporarily (only for this paragraph)  the notations $\aug{\Gamma_i}$ and $\augNor{N } {\Gamma_i}$ for the augmentation ideal of $\Z \Gamma_i$ and the  augmentation ideal of $\Z \Gamma_i$ relative to a normal subgroup $N$ of $\Gamma_i$. By the so called \emph{Normal Subgroup Correspondence} in the integral case (see \cite[Theorem III.4.17]{Seh78}), $\phi$ induces an isomorphism $\phi^*$ between the  lattices of normal subgroups of $\Gamma_1$ and $\Gamma_2$. Furthermore, by \cite[Theorem III.4.26]{Seh78} this isomorphism satisfies 
 $ \phi(\augNor{N}{\Gamma_1})=\augNor{\phi^*(N)}{\Gamma_2}. $ 
 
 Back to the modular case, no such Normal Subgroup Correspondence exists in general. However, there exists a limited version of this correspondence,    restricted to the sublattice of the lattice of normal subgroups of $G$ formed by the $k$-canonical subgroups of $G$ (more properly, evaluations in $G$ of $k$-canonical assignations). This obvious correspondence is given by $\phi^*(N_G)=N_H$, for each $k$-canonical assignation $N_*$. Thus the real problem is to identify the $k$-canonical subgroups and determine how large this sublattice can be.  In general, one might ask the following

 \begin{question}
 	Given a field $k$   and a family $\mathcal F$ of groups, which are the $k$-canonical subgroup assignations $N_*:\mathcal F\to \text{Grp}$?
 \end{question}

  Unfortunately, if $\mathcal F$ is the family of finite $p$-groups, the list of subgroups assignations which are known  to be $k$-canonical  is limited to $G'$   and to some less known examples appearing in \cite{SalimTesis}  and \cite{BK07}, which we will mention later. Even more unfortunately, a choice of subgroup assignation as natural as the center of the group is known to fail to satisfy this property in general,  as shown by Bagi\'{n}ski and Kurdics \cite[Example 2.1]{BK19}, by virtue of a group $G$ of order $81$ and maximal class, for which  there exists an automorphism $\phi:\F_3G\to \F_3G$  such that $\phi\left(\augNor{\ZZ(G)}{G}\right)\neq \augNor{\ZZ(G)}{G}$. Since $G$ is of maximal class, also $\ZZ(G)=\gamma_3(G)$, so the terms of the lower central series in general (aside from $G'=\gamma_2(G)$) are neither candidates to be $k$-canonical subgroups assignations. 
 
  However, though limited by the existence of this counterexample, it is still possible that the search of this kind of subgroups could lead to new invariants and MIP-related results, or at least to shed some new light on the existing ones.  The remainder of the section is devoted to extend this list, and to that end  the following pair  of easy general facts will be useful. The first one corresponds to \cite[Proposition III.6.1]{Seh78}.

 \begin{lemma}\label{lemma:abelianppower}
 	Let $A$ be an abelian finite $p$-group, $k$ be a field of characterisitc $p$, $t $ be a positive integer,  and $\lambda :kA\to k\mho_t(A)$ be the homomorphism given by $x\mapsto x^{p^t}$. Then
 	$$\ker\lambda =\augNor{\Omega_t(A)}{A} .$$
 	Moreover, the $k$-linear hull of the image of $\lambda$ equals $k\mho_t(A)$. 
 \end{lemma}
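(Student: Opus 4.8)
The plan is to exploit that, because $A$ is abelian, $kA$ is commutative of characteristic $p$, so $\lambda$ is precisely the $t$-fold Frobenius $x\mapsto x^{p^t}$: it is a ring homomorphism which is additive and satisfies $\lambda(\alpha x)=\alpha^{p^t}\lambda(x)$ for $\alpha\in k$, that is, it is $k$-\emph{semilinear} rather than $k$-linear. Since in an abelian $p$-group the $p^t$-th powers form the subgroup $\mho_t(A)=\{g^{p^t}:g\in A\}$, and likewise $\Omega_t(A)=\{g\in A:g^{p^t}=1\}$ is a subgroup, the element $\lambda(g)=g^{p^t}$ lands in $k\mho_t(A)$, so $\lambda$ does map into $k\mho_t(A)$. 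The \emph{moreover} assertion is then immediate: the image of $\lambda$ contains $\lambda(g)=g^{p^t}$ for every $g\in A$, and these run over all elements of $\mho_t(A)$, which form the standard $k$-basis of $k\mho_t(A)$; hence their $k$-linear hull is all of $k\mho_t(A)$.

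For the kernel I would first dispatch the easy inclusion $\augNor{\Omega_t(A)}{A}\subseteq\ker\lambda$. Indeed, for $g\in\Omega_t(A)$ one has $\lambda(g-1)=g^{p^t}-1=0$, and since $\lambda$ is a ring homomorphism while $\augNor{\Omega_t(A)}{A}$ is the ideal generated by the elements $g-1$ with $g\in\Omega_t(A)$, the whole ideal is annihilated.

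The reverse inclusion is where care is needed, and the main obstacle is exactly the $k$-semilinearity of $\lambda$: a naive rank--nullity/dimension count comparing $\dim_k\ker\lambda$ with $\dim_k\augNor{\Omega_t(A)}{A}=|A|-|\mho_t(A)|$ is delicate over a non-perfect field $k$, because the image of a Frobenius-semilinear map need not be a $k$-subspace. To circumvent this I would argue by factorization. By the easy inclusion, $\lambda$ factors through the natural projection $\pi\colon kA\to k\bar A$, where $\bar A=A/\Omega_t(A)$ and $\ker\pi=\augNor{\Omega_t(A)}{A}$; write $\lambda=\tilde\lambda\circ\pi$. It then suffices to show that the induced map $\tilde\lambda\colon k\bar A\to k\mho_t(A)$ is injective, for then $\ker\lambda=\ker\pi=\augNor{\Omega_t(A)}{A}$.

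To see that $\tilde\lambda$ is injective I would use that the endomorphism $\theta\colon A\to A$, $g\mapsto g^{p^t}$, has kernel $\Omega_t(A)$ and image $\mho_t(A)$, so by the first isomorphism theorem it induces a group isomorphism $\psi\colon\bar A\xrightarrow{\sim}\mho_t(A)$. Tracking coefficients, $\tilde\lambda$ factors as $\tilde\lambda=(k\psi)\circ F$, where $k\psi\colon k\bar A\to k\mho_t(A)$ is the $k$-linear isomorphism induced by $\psi$ and $F\colon k\bar A\to k\bar A$ is the coefficient Frobenius $\sum_{\bar g}\alpha_{\bar g}\,\bar g\mapsto\sum_{\bar g}\alpha_{\bar g}^{\,p^t}\,\bar g$. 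The latter is injective because $\alpha^{p^t}=0$ forces $\alpha=0$ in a field, and $k\psi$ is an isomorphism; hence $\tilde\lambda$ is injective, which completes the argument. This route has the advantage of working uniformly over every field of characteristic $p$, perfect or not, sidestepping the semilinear obstacle noted above.
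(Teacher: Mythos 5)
Your proof is correct and follows essentially the same route as the paper: both factor $\lambda$ through the canonical projection $kA\to k\left(A/\Omega_t(A)\right)$, composed with the algebra isomorphism induced by the group isomorphism $A/\Omega_t(A)\cong\mho_t(A)$ and a coefficient Frobenius map, and conclude from the injectivity of these last two maps that $\ker\lambda=\ker\pi=\augNor{\Omega_t(A)}{A}$. The only cosmetic difference is the order in which the Frobenius twist and the induced isomorphism are composed, and that you verify the easy inclusion separately to justify the factorization, whereas the paper reads off both inclusions at once from it.
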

\begin{proof}
Observe that the $p^t$-power map $A\to \mho_t(A)$, $x\mapsto x^{p^t}$ is a surjective homomorphism of groups whose kernel is $\Omega_t(A)$, so that there is an isomorphism of groups $\kappa:A/\Omega_t(A) \to \mho_t(A)$ which extends to an algebra isomorphism $\kappa:k\left(A/\Omega_t(A)\right)\to k\mho_t(A)$. Denote $\sigma:kA\to k\mho_t(A)$ the $k$-linear extension of the $p^t$-power map $A\to \mho_t(A)$, and $\pi:kA\to k\left(A/\Omega_t(A)\right)$ the canonical projection. Then $\sigma$ factors as $\sigma= \kappa \circ \pi $.  Now let $\tau :kA\rightarrow kA$  be the  ring  homomorphism  extending the identity on $A$ and the  map $x\rightarrow x^{p^t}$ on $k$.  Then $\lambda$ factors as $\lambda= \tau\circ \sigma=\tau\circ \kappa\circ \pi$. Since both $\tau$ and $\kappa$ are injective, $\ker \lambda=\ker\pi=\augNor{\Omega_t(A)}{A}$.  	For the last statement it suffices to observe that the image of $\lambda$ contains $\mho_t(A)$. 
\end{proof}

The following observation is trivial. 
\begin{lemma}\label{Triviality}Let $k$ be a field of characteristic $p$ and $G$ a finite $p$-group. 
	Let also $N$ and $L$ be normal subgroups of $G$ with $N\subseteq L$, and let $\pi_N:kG\to k(G/N)$ be the canonical projection. Then 
	$$\augNor{L}{G} =\pi_N^{-1}(\augNor{L/N}{G/N}). $$
\end{lemma}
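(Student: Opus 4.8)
The statement to prove is \Cref{Triviality}: for normal subgroups $N \subseteq L$ of $G$, with $\pi_N : kG \to k(G/N)$ the canonical projection,
$$\augNor{L}{G} = \pi_N^{-1}\left(\augNor{L/N}{G/N}\right).$$

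The plan is to use the fundamental characterization of relative augmentation ideals as kernels of natural projection maps, recalled in \Cref{SectionPreliminaries}: for a normal subgroup $M$ of a group $\Gamma$, the ideal $\augNor{M}{\Gamma}$ is precisely the kernel of the algebra homomorphism $k\Gamma \to k(\Gamma/M)$ extending the quotient map $\Gamma \to \Gamma/M$. I would apply this twice and exploit the transitivity of quotients.

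\begin{proof}
Let $\pi_L : kG \to k(G/L)$ and $\pi_{L/N} : k(G/N) \to k\big((G/N)/(L/N)\big)$ denote the canonical projections. By the standard correspondence recalled in \Cref{SectionPreliminaries}, we have $\augNor{L}{G} = \ker \pi_L$ and $\augNor{L/N}{G/N} = \ker \pi_{L/N}$. The third isomorphism theorem gives a canonical isomorphism $\theta : (G/N)/(L/N) \xrightarrow{\sim} G/L$, which extends to an algebra isomorphism $\theta : k\big((G/N)/(L/N)\big) \xrightarrow{\sim} k(G/L)$, and by construction these projections are compatible in the sense that $\theta \circ \pi_{L/N} \circ \pi_N = \pi_L$. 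Since $\theta$ is an isomorphism, for $x \in kG$ we have $\pi_L(x) = 0$ if and only if $\pi_{L/N}(\pi_N(x)) = 0$, that is, $x \in \ker \pi_L$ if and only if $\pi_N(x) \in \ker \pi_{L/N}$. Translating back through the identifications of kernels with relative augmentation ideals, this reads
$$x \in \augNor{L}{G} \iff \pi_N(x) \in \augNor{L/N}{G/N},$$
which is exactly the asserted equality $\augNor{L}{G} = \pi_N^{-1}\left(\augNor{L/N}{G/N}\right).$
\end{proof}

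There is essentially no obstacle here; as the authors note, the observation is trivial, and the only point requiring minimal care is verifying the compatibility $\theta \circ \pi_{L/N} \circ \pi_N = \pi_L$, which follows immediately by checking it on group elements $g \in G$, where all three routes send $g$ to the coset $gL$ under the third-isomorphism-theorem identification. All maps involved are algebra homomorphisms determined by their values on the group basis, so agreement on $G$ forces agreement on all of $kG$.
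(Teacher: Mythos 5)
Your proof is correct. The paper itself gives no argument for \Cref{Triviality} --- it is simply declared trivial --- and your derivation, identifying both relative augmentation ideals as kernels of the natural projections (the characterization recalled in \Cref{SectionPreliminaries}) and chaining them through the third isomorphism theorem with the agreement $\theta \circ \pi_{L/N} \circ \pi_N = \pi_L$ checked on group elements, is exactly the standard justification the paper implicitly relies on.
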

 
We close this section with a  lemma that allow us to obtain new $k$-canonical subgroup assignations  from known ones,  and a series of examples connecting this  with other results. A version of this lemma (comprehending only the first item), appears in \cite{GLdRS2022} with the name of Transfer Lemma. We choose to reprove it here, as the method of proof is strongly related to the one of the other items.

\begin{lemma}[Transfer Lemma]\label{mainlemma}
	Let $p$ be a prime number and $G$ and $H$ be finite $p$-groups.
	Let $t$ be a positive integer and for $\Gamma\in \{G,H\}$ let $N_\Gamma$ and $L_\Gamma$ be  normal subgroups of $\Gamma$ such that $\Gamma'\subseteq N_\Gamma$. Let $k$ be a field of characteristic $p$ and $\phi:kG\rightarrow kH$ be  a $k$-algebra isomorphism preserving the augmentation   such that $\phi(\augNor{N_G}{G})=\augNor{N_H}{H}$ and $\phi(\augNor{L_G}{G})=\augNor{L_H}{H}$. Then 
	\begin{enumerate}
		\item  \label{mainlemma1}
		$  \phi(\augNor{\Omega_t(G:N_G)}{G})=\augNor{\Omega_t(H:N_H)}{H}.$ 
		\item  \label{mainlemma3}  $\phi(\augNor{\mho_t(L_G )N_G}{G})=\augNor{\mho_t(L_H )N_H}{H}.$
	\item \label{mainlemma2} $
			\phi\left(\augNor{\Omega_t(\ZZ(G))N_G}{G}\right)=\augNor{\Omega_t(\ZZ(H))N_H}{H}.$
		
	\end{enumerate}

\end{lemma}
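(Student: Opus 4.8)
The guiding principle I would use is that any isomorphism of $k$-algebras commutes with the $p^t$-power maps $x\mapsto x^{p^t}$, and that on a \emph{commutative} algebra of characteristic $p$ this map is additive (the Frobenius), so that kernels, images and ideals built out of these maps are automatically transported by $\phi$. Since $\Gamma'\subseteq N_\Gamma$, the quotient $\bar\Gamma:=\Gamma/N_\Gamma$ is abelian, and because $\phi(\augNor{N_G}{G})=\augNor{N_H}{H}$ the map $\phi$ descends to an isomorphism $\hat\phi\colon k\bar G\to k\bar H$; combining the hypotheses $\phi(\augNor{L_G}{G})=\augNor{L_H}{H}$ and $\phi(\augNor{N_G}{G})=\augNor{N_H}{H}$ and projecting gives $\hat\phi\bigl(\augNor{\bar L_G}{\bar G}\bigr)=\augNor{\bar L_H}{\bar H}$, where $\bar L_\Gamma:=L_\Gamma N_\Gamma/N_\Gamma$. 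This commutative quotient is the setting in which I would prove \eqref{mainlemma1} and \eqref{mainlemma3}.

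For \eqref{mainlemma1}, since $\Omega_t(G:N_G)/N_G=\Omega_t(\bar G)$, \Cref{Triviality} gives $\augNor{\Omega_t(G:N_G)}{G}=\pi_{N_G}^{-1}\bigl(\augNor{\Omega_t(\bar G)}{\bar G}\bigr)$. By \Cref{lemma:abelianppower}, $\augNor{\Omega_t(\bar G)}{\bar G}$ is exactly the kernel of the power map $x\mapsto x^{p^t}$ on $k\bar G$; as $\hat\phi(x)^{p^t}=\hat\phi(x^{p^t})$, this kernel is carried by $\hat\phi$ onto the kernel of the power map on $k\bar H$, i.e.\ onto $\augNor{\Omega_t(\bar H)}{\bar H}$. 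Pulling back along the projections (using $\hat\phi\circ\pi_{N_G}=\pi_{N_H}\circ\phi$) yields \eqref{mainlemma1}. For \eqref{mainlemma3}, first note $\mho_t(L_G)N_G/N_G=\mho_t(\bar L_G)$, so by \Cref{Triviality} it suffices to transport $\augNor{\mho_t(\bar L_G)}{\bar G}$ by $\hat\phi$. The key observation is that in the commutative algebra $k\bar G$ the ideal $\augNor{\mho_t(\bar L_G)}{\bar G}$ coincides with the ideal generated by $\{x^{p^t}:x\in\augNor{\bar L_G}{\bar G}\}$: one inclusion uses $(\ell-1)^{p^t}=\ell^{p^t}-1$, and the other uses additivity of Frobenius to write $x^{p^t}=\sum_i r_i^{p^t}(\ell_i^{p^t}-1)$. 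This description is manifestly preserved by $\hat\phi$, giving $\hat\phi(\augNor{\mho_t(\bar L_G)}{\bar G})=\augNor{\mho_t(\bar L_H)}{\bar H}$, whence \eqref{mainlemma3}.

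Part \eqref{mainlemma2} is the delicate one, because the center is not canonical and cannot simply be pushed into the abelian quotient; here I would work inside the center. Intersecting with $\aug{\ZZ(G)}$ and using \Cref{lemma:MSS21} (with $N=\ZZ(G)$) together with Dedekind's modular law (valid since $\Omega_t(\ZZ(G))\subseteq\ZZ(G)$) gives $\augNor{\Omega_t(\ZZ(G))N_G}{G}\cap\aug{\ZZ(G)}=\augNor{\Omega_t(\ZZ(G))\,(\ZZ(G)\cap N_G)}{\ZZ(G)}$. In the commutative algebra $k\ZZ(G)$, writing $M:=\ZZ(G)\cap N_G$ and using that $\augNor{\Omega_t(\ZZ(G))}{\ZZ(G)}$ is the kernel of the power map $\lambda$ (\Cref{lemma:abelianppower}), this relative ideal equals $\lambda^{-1}(\lambda(\augNor{M}{\ZZ(G)}))$, an expression built solely from $\lambda$ and $\augNor{M}{\ZZ(G)}$. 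The isomorphism $\tilde\phi\colon\aug{\ZZ(G)}\to\aug{\ZZ(H)}$ produced in the proof of \Cref{JenningsNor} commutes with $\lambda$ and sends $\augNor{\ZZ(G)\cap N_G}{\ZZ(G)}$ to $\augNor{\ZZ(H)\cap N_H}{\ZZ(H)}$, so it transports this central part correctly.

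Finally I would recover the whole ideal of $kG$ from its central part. Since $\Omega_t(\ZZ(G))$ is central, $\augNor{\Omega_t(\ZZ(G))N_G}{G}=kG\cdot\augNor{\Omega_t(\ZZ(G))(\ZZ(G)\cap N_G)}{\ZZ(G)}+\augNor{N_G}{G}$, and applying $\phi$ the correction terms arising from the decomposition \eqref{CenterDecomposition} lie in $[kH,kH]\cap\ZZ(kH)\subseteq\augNor{H'}{H}\subseteq\augNor{N_H}{H}$ (using $H'\subseteq N_H$), hence are absorbed into $\augNor{N_H}{H}$. I expect this final absorption step, namely reconstructing the full $kG$-ideal from its central part modulo $\augNor{N_G}{G}$ while controlling the commutator correction, to be the main obstacle; it is precisely here, and not merely in the passage to $\bar\Gamma$, that the hypothesis $\Gamma'\subseteq N_\Gamma$ is indispensable.
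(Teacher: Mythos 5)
Your proposal is correct. For items \eqref{mainlemma1} and \eqref{mainlemma3} it is essentially the paper's own proof: pass to the commutative quotient $k(\Gamma/N_\Gamma)$, use \Cref{lemma:abelianppower} to recognize $\augNor{\Omega_t(\bar G)}{\bar G}$ as the kernel of the $p^t$-power map, respectively to see that the $p^t$-powers of elements of $\augNor{\bar L_G}{\bar G}$ generate $\augNor{\mho_t(\bar L_G)}{\bar G}$ as an ideal, and pull back through \Cref{Triviality}. For item \eqref{mainlemma2} your route is organized differently, though it rests on the same ingredients: the decomposition \eqref{CenterDecomposition}, \Cref{lemma:abelianppower} applied to $\ZZ(\Gamma)$, and the absorption $[k\Gamma,k\Gamma]\subseteq\augNor{\Gamma'}{\Gamma}\subseteq\augNor{N_\Gamma}{\Gamma}$. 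The paper never leaves $k\Gamma$: writing $\lambda_\Gamma$ for the $p^t$-power map (additive on the commutative algebra $\ZZ(k\Gamma)$), it computes $\ZZ(k\Gamma)\cap\ker\lambda_\Gamma=\augNor{\Omega_t(\ZZ(\Gamma))}{\ZZ(\Gamma)}+[k\Gamma,k\Gamma]\cap\ZZ(k\Gamma)\cap\ker\lambda_\Gamma$ and then observes that the ideal of $k\Gamma$ generated by $\ZZ(k\Gamma)\cap\ker\lambda_\Gamma$ together with $\augNor{N_\Gamma}{\Gamma}$ is exactly $\augNor{\Omega_t(\ZZ(\Gamma))N_\Gamma}{\Gamma}$; since both generating sets are visibly preserved by $\phi$, the item follows at once, with no need for an induced map $\tilde\phi$, the modular law, or a reconstruction step. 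Your version --- slice out the central part via \Cref{lemma:MSS21} plus Dedekind, transport it by the $\tilde\phi$ constructed in the proof of \Cref{JenningsNor}\eqref{JenningsNor3}, then rebuild the $kG$-ideal while absorbing the commutator corrections into $\augNor{N_H}{H}$ --- is also valid; indeed your $\lambda^{-1}(\lambda(W))$ equals $W+\ker\lambda$ by additivity of Frobenius, so your central identification agrees with the paper's. It does, however, silently use one fact that should be justified: that $\tilde\phi$ is multiplicative, so that it commutes with $\lambda$. This holds because $\tilde\phi$ is induced by the algebra isomorphism $\phi$ modulo the ideals $[k\Gamma,k\Gamma]\cap\ZZ(k\Gamma)$ of $\ZZ(k\Gamma)$, but as written it is an assertion rather than a proof. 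Finally, your closing diagnosis matches the paper exactly: in item \eqref{mainlemma2} the hypothesis $\Gamma'\subseteq N_\Gamma$ enters only through the absorption of the commutator part, whereas in the other two items it is what makes the quotient commutative.
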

\begin{proof}  Throughout the proof,  for a normal subgroup $K$ of $\Gamma$, we denote by $\pi_K:k\Gamma\to k(\Gamma/K)$ the natural projection, with kernel $\augNor{K}{\Gamma}$. The hypothesis $\phi\left(\augNor{N_G}{G}\right)=\augNor{N_H}{H}$ yields that the isomorphism $\tilde \phi:k(G/N_G)\to k(H/N_H)$ induced by $\phi$ makes the following square commutative
	\begin{equation}\begin{array}{c}
		\label{diagram1}\xymatrix{
			kG \ar[d]_-{\phi} \ar[r]^-{\pi_{N_G}} & k(G/N_G) \ar[d]^-{\tilde \phi} \\
			kH \ar[r]_-{\pi_{N_H}} & k(H/N_H)
		} 
\end{array}
	\end{equation} 
 	Let also $\lambda_\Gamma:k(\Gamma/N_\Gamma)\to k\mho_t(\Gamma/N_\Gamma)\subseteq k(\Gamma/N_\Gamma)$ be the $p^t$-power map, which is a ring homomorphism because of the commutativity of $\Gamma/N_\Gamma$.
There is a commutative square
	\begin{equation}\label{diagram2}\begin{array}{c}
		\xymatrix{
			k(G/N_G) \ar[d]_-{\tilde\phi}   \ar[r]^-{\lambda_G}& k\mho_t(G/N_G) \ar[d]^-{\tilde\phi} \\
			k(H/N_H)   \ar[r]_-{\lambda_H}& k\mho_t(H/N_H)  
		} 
\end{array}
	\end{equation}
  where the vertical arrow on the right is just the restriction of $\tilde \phi$.
	
	\eqref{mainlemma1} Taking the kernels of the maps $\lambda_\Gamma$, the commutativity of \eqref{diagram2} and \Cref{lemma:abelianppower} yield  that
$$\tilde\phi\left( \augNor{\Omega_t(G/N_G) }{G/N_G}\right)=\augNor{\Omega_t(H/N_H) }{H/N_H},$$
which can be rewritten as 
\begin{equation*}\label{eq:mainlemma1}
	\tilde\phi\left( \augNor{\Omega_t(G:N_G)/N_G }{G/N_G}\right)=\augNor{\Omega_t(H:N_H)/N_H }{H/N_H}.
\end{equation*}   Now,  since  by \Cref{Triviality} we have that
$  \pi_{N_\Gamma}^{-1} \left(\augNor{\Omega_t(\Gamma:N_\Gamma)/N_\Gamma}{\Gamma/N_\Gamma}\right)=\augNor{ \Omega_t(\Gamma:N_\Gamma)}{\Gamma} $, 
 the commutativity of \eqref{diagram1} yields  that $$\phi\left(\augNor{\Omega_t(G:N_G)}{G}\right)=\augNor{\Omega_t(H:N_H)}{H}.$$

	  \eqref{mainlemma3} Observe that the hypotheses imply that $\phi(\augNor{L_GN_G}{G})=\augNor{L_HN_H}{H}$; moreover in general $\mho_t(L_\Gamma)N_\Gamma=\mho_t(L_\Gamma N_\Gamma)N_\Gamma$. Thus     we can assume without loss of generality that $N_\Gamma\subseteq L_\Gamma$. Now observe that $\pi_{N_\Gamma}\left(\augNor{L_\Gamma}{\Gamma}\right)=\augNor{L_\Gamma/N_\Gamma}{\Gamma/N_\Gamma}$. Hence, by the commutativity of \eqref{diagram1}, 
	  $$\tilde \phi\left( \augNor{L_G/N_G}{G/N_G} \right)=\augNor{L_H/N_H}{H/N_H}. $$
	  Observe also that 
	  $ \lambda_\Gamma\left(\augNor{L_\Gamma/N_\Gamma}{\Gamma/N_\Gamma}  \right) $ generates $\augNor{\mho_t(L_\Gamma/N_\Gamma)}{\Gamma/N_\Gamma}=\augNor{\mho_t(L_\Gamma)N_\Gamma/N_\Gamma}{\Gamma/N_\Gamma}$ as an ideal of $k(\Gamma/N_\Gamma)$. Hence by the commutativity of  \eqref{diagram2},  
	 $$ \tilde \phi\left(\augNor{\mho_t(G)N_G/N_G}{G/N_G}\right)=\augNor{\mho_t(H)N_H/N_H}{H/N_H}.$$
	 Now the result follows from \Cref{Triviality} and the commutativity of \eqref{diagram1} as in the previous item.

	  \eqref{mainlemma2} Using the decomposition of the center \eqref{CenterDecomposition}, we get that 
	  \begin{align*}
	  	\ZZ(k\Gamma)\cap \ker \lambda_\Gamma&= k\ZZ(\Gamma)\cap \ker \lambda_\Gamma+ [k\Gamma,k\Gamma]\cap \ZZ(k\Gamma)\cap \ker \lambda_\Gamma\\
	  	&=\augNor{\Omega_t(\ZZ(\Gamma))}{\ZZ(\Gamma)} +   [k\Gamma,k\Gamma]\cap \ZZ(k\Gamma)\cap \ker \lambda_\Gamma ,
	  \end{align*}
  where the second equality is due to \Cref{lemma:abelianppower}. 
  As $[k\Gamma,k\Gamma]\subseteq \augNor{\Gamma'}{\Gamma}\subseteq \augNor{N_\Gamma}{\Gamma}$, it follows that the ideal of $k\Gamma$ generated by $\ZZ(k\Gamma)\cap \ker\lambda_\Gamma$ and $\augNor{N_\Gamma}{\Gamma}$ is exactly 
  $\augNor{\Omega_t(\ZZ(\Gamma))N_\Gamma}{\Gamma}.$ Thus $\phi(\ZZ(kG)\cap \ker \lambda_G) =\ZZ(kH)\cap \ker \lambda_H$ and $\phi(\augNor{N_G}{G})=\augNor{N_H}{H}$ imply the result.
  
\end{proof}

\begin{examples} \rm We illustrate some uses of the Transfer Lemma and relate it to some (recent and classical) known results, as a  unified way to approach them. Let $k$ be a field of characteristic $p$, $G$ and $H$ finite $p$-groups, and  $\phi:kG\to kH$ be an isomorphism preserving the augmentation, and fix $t\geq 1$.
	\begin{enumerate} 
		\item   Taking $t=1$ and $N_G=G'$, \Cref{mainlemma}.\eqref{mainlemma3} yields that
		\begin{equation}
			\label{CenterDerivedCanonical}\phi(\augNor{\ZZ(G)G'}{G}=\augNor{\ZZ(H)H'}{H}
		\end{equation}   This was already proven in \cite{BK07} in order to show that if $G$ is of nilpotency class $2$ and $KG\cong kH$, then $H$ also has nilpotency class $2$.
		\item Taking   $N_\Gamma=\Gamma'$ and $L_\Gamma=\Gamma$,  \Cref{mainlemma}\eqref{mainlemma3} yields that 
		\begin{equation}\label{Salim1}
			\phi\left(\augNor{\mho_t(G)G'}{G}\right)=\augNor{\mho_t(H)H'}{H},
		\end{equation}
	which is \cite[Theorem 3.2]{SalimTesis}. In particular for $t=1$ this yields
	$ \phi\left(\augNor{\Phi(G)}{G}\right) =\augNor{\Phi(H)}{H} $. For $N_\Gamma=G'$, \Cref{mainlemma}\eqref{mainlemma1}  becomes
	$
		 \phi(\augNor{\Omega_t(G:G')}{G})=\augNor{\Omega_t(H:H')}{H}$,
which is  \cite[Lemma 3.5]{SalimTesis}.
		\item  We also observe   that  \cite[Theorem B]{MSS21}   follow  as   special   cases  of the previous lemmas.  Indeed,  applying \Cref{JenningsNor}\eqref{JenningsNor3} to \eqref{Salim1} we obtain  that
		 $ \ZZ(G)\cap \mho_t(G)G'\cong \ZZ(H)\cap \mho_t(H)H'$ and $ \ZZ(G)\mho_t(G)G'/\mho_t(G)G'\cong \ZZ(H)\mho_t(H)H'/\mho_t(H)H'$.  Furthermore, for $N_\Gamma=\Gamma'$, \Cref{mainlemma}\eqref{mainlemma2} yields that 
		\begin{equation} \label{eq:example2}
			\phi\left(\augNor{\Omega_t(\ZZ(G))G'}{G}\right)=\augNor{\Omega_t(\ZZ(H))H'}{H},
		\end{equation}
	and hence applying \Cref{JenningsNor}\eqref{JenningsNor2} we get that 
			$	G/\Omega_t(\ZZ(G))G'\cong H/\Omega_t(\ZZ(H))H'$ and   $   \Omega_t(\ZZ(G))G'/G'\cong \Omega_t(\ZZ(H))H'/H'$,
the two remaining invariants.
	 Some other invariants resembling the previous ones but not appearing in the mentioned theorem also follow  readily. For example,   for   $L_\Gamma=\ZZ( \Gamma)\Gamma'$ and $N_\Gamma=\Gamma'$, \Cref{mainlemma}\eqref{mainlemma3} yields $
			\phi(\augNor{\mho_t(\ZZ(G))G'}{G})=\augNor{\mho_t(\ZZ(H))H'}{H} 
		$. So that applying \Cref{JenningsNor}\eqref{JenningsNor2} we derive that $$
	G/\mho_t(\ZZ(G))G'\cong H/\mho_t(\ZZ(H))H'\qand  \mho_t(\ZZ(G))G'/G'\cong \mho_t(\ZZ(H))H'/H'.$$ 
These two last invariants, as far as we know, were never considered before.
		
		\item For $N_\Gamma=\ZZ(G)G'$, \Cref{mainlemma}\eqref{mainlemma1} and \eqref{CenterDerivedCanonical}  yield  that 
		$ \phi\left(\augNor{\Omega_t(G:\ZZ(G)G')}{G}\right) =\augNor{\Omega_t(H:\ZZ(H)H')}{H}$. 
		This leads, with the help of \Cref{JenningsNor}, to a number of invariants of the group algebra, which, to the best of our knowledge, were used for the first time   in \cite{GLdRS2022}. There it is shown   (see \cite[Lemma 4.1]{GLdRS2022}) that for $p$-groups with cyclic derived subgroup and $p>2$ there exists an integer $t$ depending only on $kG$ such that 
		$ C_G(G')=\Omega_t( G:\ZZ(G)G') $,  
		so over this family of groups the subgroup assignation  $G\mapsto C_G(G')$ is $k$-canonical. 
	\end{enumerate}
	 
 \end{examples}

\section{Abelian direct factors}\label{section:proofs}
 This section is devoted to prove our main results, and, after a short introduction, it is divided in two parts:  the first one is focused on \Cref{mainprop}, while the second one  contains the proof of \Cref{maintheorem}.

 Let $G$ be a finite $p$-group. A  \emph{homocyclic decomposition} $\mathfrak d$ of $G$ is an internal direct product decomposition  \begin{equation}\label{decomposition}
	\mathfrak d:\qquad \qquad G= U_1\oplus \dots \oplus U_l\oplus A_1\oplus \dots \oplus A_k
\end{equation}  
where $U_i,A_i$ are subgroups of $G$, $U_i$ is non-abelian and indecomposable, and $A_i$ is homocyclic of exponent $p^{i}$ and rank $r_i$, i.e., $A_i\cong(C_{p^{i}})^{r_i}$. Here we allow $r_i=0$, in which case $A_i=1$.
Such a decomposition always exists by the Krull-Remak-Schmidt theorem.   Given a decomposition \eqref{decomposition}, denote  $H_i^{\mathfrak d}(G)=A_i $.   Observe that the isomorphism type of $H_i^{\mathfrak d}(G)$ does not depend on $\mathfrak d$,  also by the Krull-Remak-Schmidt theorem. Sometimes we are only interested in the isomorphism type of $H_i^{\mathfrak d}(G)$, and in such case we  drop $\mathfrak d$ from the notation.   We say that $H_i (G)$ is the \emph{homocyclic component of $G$ of exponent $p^i$}. Moreover, if we can express $G$ as an internal direct product $G=S\oplus T$, where $T$ is homocyclic of exponent $p^i$, then $G$ has a homocyclic decomposition  $\mathfrak d$ satisfying $T\subseteq H_i^{\mathfrak d}(G)$. In this case we say that $\mathfrak d$ \emph{extends} the decomposition $G=S\oplus T$.

With the   notation above, we also denote $\text{\rm Ab}^{\mathfrak d}(G)=A_1\oplus \dots \oplus A_k $ and $\text{\rm NAb}^{\mathfrak d}(G)=U_1\oplus\dots \oplus U_l$. The same considerations using the Krull-Remak-Schmidt theorem yield that the isomorphism types of these subgroups do  not depend on $\mathfrak d$. Hence we can drop it from the notation and write
$$G\cong \text{\rm Ab}(G)\oplus\text{\rm NAb}(G), $$
so this  notation agrees with the one used in the introduction. 
 
 \vspace*{0.2cm}

For the rest of the section, let $t$ be a positive integer. The map
$$\lambda_G^{t-1}:\frac{\Omega_t(\ZZ(G)) \Phi(G)}{  \Phi(G)}\to \frac{\mho_{t-1}(G)G'}{  \mho_t(G)G'}, \quad x\Phi(G) \mapsto x^{p^{t-1}}\mho_t(G)G',$$  
which is a  homomorphism of elementary abelian $p$-groups, will play an important role in the proof of \Cref{maintheorem} as a tool to detect homocyclic components. We continue by listing some elementary properties of the homocyclic components.

 \begin{lemma}\label{list}
 	Let $\mathfrak d$ be a homocyclic decomposition of $G$. Then \begin{enumerate}
 		\item \label{l1} $H_t^{\mathfrak d}(G)\subseteq \Omega_t(\ZZ(G))$. 
 		\item \label{l2}$d(H_t^{\mathfrak d}(G))=d(H_t^{\mathfrak d}(G)\Phi(G)/\Phi(G))$.
 		\item \label{l3-}If $U$ is a subgroup of $G$ such that $G=U\oplus H_t^{\mathfrak d}(G)$, then $\mho_t(G)G'=\mho_t(U)U'$.
 		\item \label{l3} $H_t^{\mathfrak d}(G)\cap \mho_t(G)G'=1$.
 		\item\label{l4} The map $\lambda_G^{t-1}$ restricted to 
 		$ H_t^{\mathfrak d}(G)\Phi(G)/\Phi(G)  $ is injective. 
 	\end{enumerate} 
 \end{lemma}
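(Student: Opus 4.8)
The plan is to fix the homocyclic decomposition $\mathfrak d: G=U_1\oplus\dots\oplus U_l\oplus A_1\oplus\dots\oplus A_k$ realizing $H_t^{\mathfrak d}(G)=A_t$, and to exploit throughout two elementary facts about internal direct products of finite $p$-groups: an abelian direct factor is automatically central, and the commutator, agemo and Frattini subgroups of a direct product split as the direct product of the corresponding subgroups of the factors (for the last one, because $\Phi=G'\mho_1(G)$ and both pieces split).

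For \eqref{l1}, since $A_t$ is a direct factor it commutes with every other factor, and being abelian it commutes with itself, so $A_t\subseteq\ZZ(G)$; as $A_t$ has exponent $p^t$ every element is killed by $p^t$, giving $A_t\subseteq\Omega_t(\ZZ(G))$. For \eqref{l2}, the splitting $\Phi(G)=\Phi(U_1)\oplus\dots\oplus\Phi(A_k)$ together with directness yields $A_t\cap\Phi(G)=\Phi(A_t)=\mho_1(A_t)$, hence $A_t\Phi(G)/\Phi(G)\cong A_t/\mho_1(A_t)$ is elementary abelian of rank $r_t=d(A_t)$, which is exactly the claimed equality of generator numbers.

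For \eqref{l3-}, write $G=U\oplus A_t$; centrality of $A_t$ forces $G'=U'$, and since any $g\in G$ factors as $g=ua$ with $u\in U$, $a\in A_t$ commuting and $a^{p^t}=1$, one gets $g^{p^t}=u^{p^t}$, so $\mho_t(G)=\mho_t(U)$ and therefore $\mho_t(G)G'=\mho_t(U)U'$. Item \eqref{l3} is then immediate: applying \eqref{l3-} to the complement $U=\bigoplus_i U_i\oplus\bigoplus_{j\neq t}A_j$ of $A_t$ gives $\mho_t(G)G'=\mho_t(U)U'\subseteq U$, and $A_t\cap U=1$ by directness.

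Finally, \eqref{l4} is where the earlier items are assembled, and it is the step needing the most care; note first that the restriction even makes sense only by \eqref{l1}. Take $a\in A_t$ with $\lambda_G^{t-1}(a\Phi(G))$ trivial, i.e. $a^{p^{t-1}}\in\mho_t(G)G'$. By \eqref{l3-} and \eqref{l3} we have $\mho_t(G)G'\subseteq U$, while $a^{p^{t-1}}\in A_t$, so $a^{p^{t-1}}\in A_t\cap U=1$ and $a$ has order dividing $p^{t-1}$. Since $A_t$ is homocyclic of exponent $p^t$, the elements of order dividing $p^{t-1}$ form precisely $\mho_1(A_t)=\Phi(A_t)\subseteq\Phi(G)$, so $a\Phi(G)$ is already trivial and the restricted map has trivial kernel. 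The only genuine obstacle here is the bookkeeping: one must track that $a^{p^{t-1}}$ lands simultaneously in $A_t$ and in a complementary direct factor, and then invoke the homocyclic structure to pass from $a^{p^{t-1}}=1$ to $a\in\Phi(A_t)$; everything else is a direct consequence of the direct‑product splitting of $G'$, $\mho_t$ and $\Phi$.
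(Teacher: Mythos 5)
Your proof is correct and follows essentially the same route as the paper: items \eqref{l1}--\eqref{l2} by elementary direct-product splittings, \eqref{l3-} from centrality of the abelian factor and its exponent, \eqref{l3} as an immediate consequence, and \eqref{l4} by observing that a kernel element $a\in H_t^{\mathfrak d}(G)$ satisfies $a^{p^{t-1}}\in H_t^{\mathfrak d}(G)\cap \mho_t(G)G'=1$ and then using that in a homocyclic group of exponent $p^t$ all elements outside the Frattini subgroup have order $p^t$. The only cosmetic difference is that you phrase the key intersection in \eqref{l4} as $A_t\cap U=1$ via $\mho_t(G)G'\subseteq U$, while the paper invokes \eqref{l3} directly; the content is identical.
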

 \begin{proof}
 	\eqref{l1} and \eqref{l2} are obvious.  Let $U$ be a subgroup of $G$ such that $G=U\oplus H_t^{\mathfrak d}(G)$. As $H_t^{\mathfrak d}(G)$ is abelian, it follows that $G'=U'$; similarly $\mho_t(G)=\mho_t(U)$ because $H_t^{\mathfrak d}(G)$ has exponent $p^t$, and \eqref{l3-} follows. In particular $\mho_t(G)G'\cap H_t^{\mathfrak d}(G)\subseteq U\cap H_t^{\mathfrak d}(G)=1$. This proves  \eqref{l3}. Finally, observe that, in general, for a  homocyclic group $H$ of exponent $p^t$, every element in $H\setminus \Phi(H)$ has order $p^t$. Therefore if $x\in H_t^{\mathfrak d}(G)$ is  such that $x^{p^{t-1}}\in \mho_t(G)G'$, then by \eqref{l2} we have that $x^{p^{t-1}}=1$, and hence $x\in \Phi(H_t^{\mathfrak d}(G))\subseteq \Phi(G)$. 
 \end{proof}
 
 \subsection{Proof of \Cref{mainprop}}

 \begin{lemma}\label{lemma:directfactorNuevo}
 		Let $T$ be a  homocyclic subgroup of $\Omega_t(\ZZ(G))$ with exponent $p^t$ such that $d(T)=d(T\Phi(G)/\Phi(G))$ and  $T\cap  \mho_t(G)G'=1$. Then $G=S\oplus T$, for some subgroup $S$ of $G$.
 \end{lemma}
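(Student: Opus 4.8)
The plan is to exploit that $T$ is central, so that splitting it off as a direct factor is equivalent to constructing a retraction onto it. Since $T\subseteq \ZZ(G)$ is in particular normal, it suffices to produce a group homomorphism $f\colon G\to T$ with $f|_T=\mathrm{id}_T$: setting $S:=\ker f$, one checks immediately that $S\cap T=\ker(f|_T)=1$ and that $G=ST$ (for $g\in G$ one has $g=(g\,f(g)^{-1})f(g)$ with $f(g)\in T$ and $g\,f(g)^{-1}\in\ker f$, using $f|_T=\mathrm{id}$). As $T$ is central both factors are normal, so $G=S\oplus T$. Thus the whole statement reduces to the existence of such a retraction $f$.

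Next I would linearize the target. Since $T$ is abelian of exponent $p^t$, any homomorphism from $G$ to $T$ kills both $G'$ and $\mho_t(G)$, and hence factors through the finite abelian $p$-group $B:=G/M$, where $M:=\mho_t(G)G'$. Note $M\subseteq\Phi(G)$, so $B$ has exponent dividing $p^t$ and $B/\Phi(B)\cong G/\Phi(G)$. Writing $\bar T:=TM/M$, the hypothesis $T\cap \mho_t(G)G'=1$ says precisely that the natural map $j\colon T\to\bar T$ is an isomorphism, so $\bar T$ is homocyclic of exponent $p^t$; in particular $\exp(B)=p^t$. Moreover the hypothesis $d(T)=d(T\Phi(G)/\Phi(G))$ means that a basis of $T$ has linearly independent image in $G/\Phi(G)$, and since the isomorphism $B/\Phi(B)\cong G/\Phi(G)$ carries the image of $\bar T$ onto that of $T$, this translates into $\bar T\cap\Phi(B)=\Phi(\bar T)$. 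Hence it is enough to find a retraction $\rho\colon B\to\bar T$ of abelian groups with $\rho|_{\bar T}=\mathrm{id}$: then $f:=j^{-1}\circ\rho\circ\pi$, with $\pi\colon G\to B$ the projection, is the desired retraction onto $T$, because $\pi|_T=j$ and $\rho$ fixes $\bar T$ pointwise.

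It remains to solve the abelian problem: I claim $\bar T$ is a direct summand of $B$, which yields $\rho$. Here $B$ is a finite abelian $p$-group of exponent $p^t$ and $\bar T\le B$ is homocyclic of the maximal exponent $p^t$, with the generators of $\bar T$ independent modulo $\Phi(B)$. This is exactly the classical situation, underlying the proof of the basis theorem for finite abelian $p$-groups, in which a homocyclic subgroup of maximal exponent whose generators are independent modulo the Frattini subgroup is pure, and therefore a direct summand; concretely one splits off one cyclic summand $\langle x_i\rangle$ of maximal order $p^t$ at a time, the independence modulo $\Phi(B)$ guaranteeing that the remaining generators retain maximal order and stay independent in each successive quotient. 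I expect this abelian direct‑summand step to be the only delicate point; the rest of the work is the bookkeeping of checking that the two hypotheses transfer correctly to $B$ — that $\bar T$ is genuinely homocyclic of exponent $\exp(B)$ (via $T\cap M=1$) and that independence modulo $\Phi(G)$ descends to independence modulo $\Phi(B)$ — after which the standard theory of finite abelian $p$-groups applies. Composing the resulting retraction $\rho$ back through $\pi$ and $j^{-1}$ produces $f$, and $S=\ker f$ completes the proof.
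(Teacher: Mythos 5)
Your proposal is correct, but it takes a genuinely different route from the paper's proof. The paper never leaves $G$: it proves a recursive claim showing that for a Burnside basis $\{x_1,\dots,x_n,y\}$ of $G$ with $y\in \Omega_t(\ZZ(G))$ of order $p^t$ and $\GEN{y}\cap \mho_t(G)G'=1$, one can replace each $x_i$ by $x_i'=x_i^{w_i}y^{e_i}$ so that $G=\GEN{x_1',\dots,x_n'}\oplus\GEN{y}$, maintaining the invariant $S_jN\cap\GEN{y}=1$ with $N=\mho_t(G)G'$; the lemma then follows by splitting off the $r$ generators of $T$ one at a time. So the paper also implicitly works modulo $\mho_t(G)G'$, but via explicit generator manipulation rather than by passing to the quotient. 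You instead reduce conceptually: centrality of $T$ turns the splitting into the existence of a retraction $f\colon G\to T$, any such $f$ factors through the abelian quotient $B=G/\mho_t(G)G'$ of exponent $p^t$, and there the hypotheses become that $\bar T\cong T$ is homocyclic of the maximal exponent $\exp(B)$, so classical splitting theory for finite abelian $p$-groups finishes the job. Your approach buys modularity and makes visible what each hypothesis is for (in particular, $T\cap\mho_t(G)G'=1$ is exactly what makes $\bar T$ faithful and of maximal exponent in $B$); the paper's is elementary and self-contained, citing nothing.

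Concerning the one step you flagged as delicate: it is even easier than you suggest, because purity of $\bar T$ in $B$ is automatic and does not use the independence-modulo-Frattini hypothesis at all. Indeed, if $u\in \bar T\cap p^nB$, then $p^{t-n}u=0$ since $\exp(B)=p^t$, and in a homocyclic group of exponent $p^t$ the elements killed by $p^{t-n}$ are precisely $p^n\bar T$; hence $\bar T\cap p^nB=p^n\bar T$ for all $n$, and a pure subgroup of a finite abelian group is a direct summand. (This also shows, translating back through $B/\Phi(B)\cong G/\Phi(G)$ as you do, that the hypothesis $d(T)=d(T\Phi(G)/\Phi(G))$ is actually implied by the other two hypotheses of the lemma.) In particular your iterative version, where one would need a small argument that independence modulo the Frattini subgroup survives in each successive quotient, can be bypassed entirely, and your proof is complete.
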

\begin{proof}To simplify the notation, we set $N=\mho_t(G)G'$. 
	The proof will rely on the following 
	
	\vspace*{0.1cm}
	
	\noindent \textbf{Claim}. Let  $\{x_1,\dots, x_n,y\}$ be a Burnside basis of $G$ and assume that $y\in \Omega_t(\ZZ(G))$  has order $p^t$ and verifies $\GEN{y}\cap N=1$. Then 
	 $$G=\GEN{x_1'  ,\dots, x_n'}\oplus \GEN{y}, $$ 
	where $x_i'= x_i^{w_i} y^{e_i}$, for some integers $w_i$ and $e_i$ with $p\nmid w_i$, for each $i$.

	\vspace*{0.1cm}
	
\noindent	\textit{Proof of the claim}:  We set $S_0=1$ and construct   $x_j'=x_j^{w_j}y^{e_j}$ recursively such that $S_jN\cap \GEN{y}=1$, where $S_j=\GEN{x_1',\dots,x_j'}$.
	Suppose that $j\ge 0$ and $x'_1,\dots,x'_j$ have been already constructed with $S_jN\cap \GEN{y}=1$.
	If $\GEN{S_j,x_{j+1} }N \cap \GEN{y}=1$, then set $x_{j+1}'=x_{j+1}$ (i.e., $e_{j+1}=0$ and $w_{j+1}=1$), $S_{j+1}=\GEN{S_j,x_{j+1}}$, and continue. Assume on the contrary that $\GEN{S_j,x_{j+1}}N\cap \GEN{y}=\GEN{y^{p^e}}\neq 1$. In particular $e<t$. Then taking quotients modulo $S_jN$ one has that $\GEN{x_{j+1}S_jN}\cap \GEN{yS_jN}=\GEN{y^{p^e}S_jN}\neq 1$, since if $y^{p^e}\in S_jN$ then $y^{p^e}=1$ by the hypothesis $S_jN\cap \GEN{y}=1$. Thus there is an integer $wp^s$ with $p\nmid w$ such that $x_{j+1}^{wp^s} y^{-p^e}\in S_jN$. If $s\geq t$, then $y^{p^e}\in S_jN$, a contradiction. Thus $s<t$. If $ s>e$ then  $x_{j+1}^{wp^t}y^{-p^{e+t-s}}\in S_jN$, so $1\neq y^{p^{e+t-s}}\in S_jN$, a contradiction. Thus $s\leq e$. Then set $x_{j+1}'=x_{j+1}^w y^{p^{e-s}}$ and $S_{j+1}=\GEN{S_j,x_{j+1}'}$. The previous argument shows that $S_{j+1}N\cap \GEN{y}=1$. This finishes the recursive construction of the $x_j'$'s. Then $S_n\cap \GEN{y}=1$ and hence $G=\GEN{x_1,\dots, x_n,y}=\GEN{x_1',\dots, x_n',y}=S_n\oplus \GEN{y}$.  This finishes the proof of the claim. 

	\vspace*{0.1cm}

Now we are ready to prove the lemma. Let $\{y_1,\dots, y_r\}$ be a Burnside basis of $T$. Clearly each $y_i$ has order $p^t$, as $T$ is homocyclic with exponent $p^t$. Since $d(T\Phi(G)/\Phi(G))=d(T)=r$,  we can extend it to a Burnside basis $\{x_1,\dots, x_n,y_1,\dots, y_r\}$ of $G$. Then we can apply the claim $r$ times to obtain a decomposition 	$$G=\left( \left(  \left( \GEN{x_1',\dots, x_n'} \oplus \GEN{y_1'}\right)\oplus\GEN{y_2'}\right) \dots \right) \oplus \GEN{y_r}= \GEN{x_1',\dots, x_n'}\oplus \GEN{y_1',\dots,  y_{r-1}',y_r},$$
where, for $1\leq i<r$,
$$y_i'= y_i^{w_i} y_{i+1}^{e_{i,i+1}}\dots y_{r }^{e_{i,r }}   $$
for suitable integers $e_{i,j}  $ and $w_i$, with $p\nmid w_i$. This implies that $\GEN{y_1',\dots, y_{r-1}',y_r}=\GEN{y_1,\dots, y_r}=T$, so the lemma follows taking $S=\GEN{x_1',\dots, x_n'}$.
\end{proof}

\begin{lemma}\label{lemma:HPiOmega}One has
	$$H_t(G)\cong H_t\left(\frac{  \Omega_t(\ZZ(G))\mho_{t}(G)G' }{\mho_{t}(G)G'} \right) . $$
\end{lemma}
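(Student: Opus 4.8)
The plan is to exploit that both $H_t(G)$ and $H_t\left(\frac{\Omega_t(\ZZ(G))\mho_t(G)G'}{\mho_t(G)G'}\right)$ are, by construction, homocyclic of exponent $p^t$; hence it suffices to show that they have the same rank. Write $N=\mho_t(G)G'$ and $B=\frac{\Omega_t(\ZZ(G))N}{N}$. Since $\Omega_t(\ZZ(G))$ has exponent dividing $p^t$, so does $B$, and therefore the number of cyclic summands of $B$ of order exactly $p^t$ equals $d(\mho_{t-1}(B))$. A direct computation shows that $\mho_{t-1}(B)=\{x^{p^{t-1}}N : x\in\Omega_t(\ZZ(G))\}$, which is precisely the image of the homomorphism $\lambda_G^{t-1}$ inside $\frac{\mho_{t-1}(G)G'}{N}$. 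As this image is elementary abelian, the whole statement reduces to proving
\[
\dim_{\F_p}\left(\Imagen \lambda_G^{t-1}\right) = d(H_t(G)).
\]

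For the inequality $\dim \left(\Imagen \lambda_G^{t-1}\right)\ge d(H_t(G))$ I would fix a homocyclic decomposition $\mathfrak d$ and invoke \Cref{list}: by \eqref{l4} the restriction of $\lambda_G^{t-1}$ to $H_t^{\mathfrak d}(G)\Phi(G)/\Phi(G)$ is injective, so its image has dimension $d(H_t^{\mathfrak d}(G)\Phi(G)/\Phi(G))$, which equals $d(H_t(G))$ by \eqref{l2}. Thus $\dim\left(\Imagen\lambda_G^{t-1}\right)$ is at least $d(H_t(G))$.

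The substance of the argument is the reverse inequality, and this is where \Cref{lemma:directfactorNuevo} enters. Set $m=\dim\left(\Imagen\lambda_G^{t-1}\right)$ and choose $x_1,\dots,x_m\in\Omega_t(\ZZ(G))$ whose images $x_i^{p^{t-1}}N$ form an $\F_p$-basis of $\Imagen\lambda_G^{t-1}$. I would then let $T=\GEN{x_1,\dots,x_m}$ and verify that $T$ satisfies the hypotheses of \Cref{lemma:directfactorNuevo}; all four conditions flow from this linear independence together with the centrality of the $x_i$. Indeed, each $x_i$ has order exactly $p^t$ (as $x_i^{p^{t-1}}\notin N$ while $x_i^{p^t}=1$); any relation $\prod x_i^{a_i}=1$ forces, after factoring out the minimal $p$-power among the $a_i$ and raising to $p^{t-1}$, a linear dependence among the $x_i^{p^{t-1}}N$, so $T\cong (C_{p^t})^m$ is homocyclic of exponent $p^t$; the same valuation-extraction argument applied to an element of $T\cap N$ (using $\mho_{t-1}(T)\cap N=1$, again by independence) gives $T\cap\mho_t(G)G'=1$; and the independence of $x_1\Phi(G),\dots,x_m\Phi(G)$ in $G/\Phi(G)$ follows because any $z=\prod x_i^{c_i}\in\Phi(G)\cap\Omega_t(\ZZ(G))$ satisfies $z^{p^{t-1}}\in N$ — which is exactly the well-definedness of $\lambda_G^{t-1}$ — so $\sum c_i\,x_i^{p^{t-1}}N=0$ and hence all $c_i\equiv 0$. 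Consequently $d(T)=d(T\Phi(G)/\Phi(G))$, and \Cref{lemma:directfactorNuevo} yields $G=S\oplus T$ for some $S$. Extending this to a homocyclic decomposition places $T$ inside a homocyclic component of exponent $p^t$, whence $m=d(T)\le d(H_t(G))$.

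Combining the two inequalities gives $\dim\left(\Imagen\lambda_G^{t-1}\right)=d(H_t(G))=d(H_t(B))$, and since both groups are homocyclic of exponent $p^t$ the desired isomorphism follows. The main obstacle is the verification that $T$ fulfills \emph{all} the hypotheses of \Cref{lemma:directfactorNuevo} — in particular that $T$ is genuinely homocyclic of full rank $m$ and meets $\mho_t(G)G'$ trivially — since this is precisely the point where the additive independence of the $x_i^{p^{t-1}}N$ must be upgraded to multiplicative independence of the $x_i$ themselves. The recurring device for this is to extract the minimal $p$-adic valuation among the exponents of a purported relation and push the relation down to $\mho_{t-1}$, where independence is available by the very choice of the $x_i$.
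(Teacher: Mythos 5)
Your proposal is correct; every step checks out, including the two ``valuation extraction'' upgrades from additive independence of the $x_i^{p^{t-1}}N$ to multiplicative independence of the $x_i$, and the identification $\Imagen\lambda_G^{t-1}=\mho_{t-1}(B)$ for $B=\Omega_t(\ZZ(G))\mho_t(G)G'/\mho_t(G)G'$.

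Your route, however, is organized quite differently from the paper's, even though both arguments run on the same engine, \Cref{lemma:directfactorNuevo}. You reduce the lemma to a single numerical identity, $\dim_{\F_p}\left(\Imagen\lambda_G^{t-1}\right)=d(H_t(G))$, using that for an abelian group $B$ of exponent dividing $p^t$ one has $d(H_t(B))=d(\mho_{t-1}(B))$ and that $\mho_{t-1}(B)$ is precisely the image of $\lambda_G^{t-1}$; you then prove the two inequalities separately. The paper instead never mentions $\lambda_G^{t-1}$ in this proof: it fixes a homocyclic decomposition $\mathfrak d$ of $G$, shows that the projection $\pi$ modulo $\mho_t(G)G'$ embeds $H_t^{\mathfrak d}(G)$ into a homocyclic component of the quotient (this requires some careful juggling with a complement $V$ of $H_t^{\mathfrak d}(G)$ and with extending $\pi(U)\oplus\pi(H_t^{\mathfrak d}(G))$ to a homocyclic decomposition of $\pi(\Omega_t(\ZZ(G)))$), and rules out strict containment by lifting a single element $y$ of order $p^t$ from a hypothetical complement and applying \Cref{lemma:directfactorNuevo} to $\GEN{y}\oplus H_t^{\mathfrak d}(G)$, contradicting Krull--Remak--Schmidt. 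The two proofs are parallel in outline: your lower bound, via \Cref{list}\eqref{l2} and \eqref{l4}, plays the role of the paper's embedding, and your upper bound plays the role of its non-strictness argument. What your approach buys: you avoid homocyclic decompositions of the quotient altogether, and your use of the image of $\lambda_G^{t-1}$ anticipates the linear-algebraic framework (kernel/complement of $\lambda_G^{t-1}$) that the paper only develops afterwards in \Cref{lemma:complement} and \Cref{subspaceV}, so your proof arguably fits the later machinery more naturally. The cost is that you must verify \emph{all} hypotheses of \Cref{lemma:directfactorNuevo} for a subgroup $T$ built from scratch out of $m$ lifted generators, whereas the paper inherits most of these properties for free from the existing component $H_t^{\mathfrak d}(G)$ (via \Cref{list}) and only has to control one extra generator.
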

 \begin{proof} Fix a  homocyclic decomposition $\mathfrak d$ of $G$. Let $\pi:G\to G/\mho_{t}(G)G'$ be the natural projection. By \Cref{list}\eqref{l3} the map $\pi$ restricted to $H_t^{\mathfrak d}(G)$ is injective. Let $V$ be a subgroup of $G$ such that $G=V\oplus H_t^{\mathfrak d}(G)$, and write $U=V\cap \Omega_t(\ZZ(G))$.   Then  $\Omega_t(\ZZ(G))=U\oplus H_t^{\mathfrak d}(G)$. Moreover, $\pi(\Omega_t(\ZZ(G)))=\pi(U)\oplus \pi(H_t^\mathfrak d(G))$. Indeed, it suffices to prove that $ \pi(U)\cap \pi(H_t^\mathfrak d(G))=1$, and to show this we observe that  	$\mho_t(G)G'=\mho_t(V)V'\subseteq V$ by \Cref{list}\eqref{l3-}, and hence $U\mho_t(G)G'\cap (H_t^{\mathfrak d}(G)\mho_t(G)G')\subseteq V\mho_t(G)G'\cap (H_t^{\mathfrak d}(G)\mho_t(G)G')= V\cap (H_t^{\mathfrak d}(G)\mho_t(G)G')=(V\cap H_t^{\mathfrak d}(G))  \mho_t(G)G'=\mho_t(G)G'$.
 	
 	  We can extend the decomposition $ \pi(\Omega_t(\ZZ(G)))=\pi(U)\oplus \pi(H_t^\mathfrak d(G))$ to a  homocyclic decomposition $\bar{\mathfrak d}$ of $\pi(\Omega_t(\ZZ(G)))$. Then    $H_t^{\mathfrak d}(G)\cong \pi(H_t^{\mathfrak d}(G))\subseteq H_t^{\bar {\mathfrak d } }(\pi(\Omega_t(\ZZ(G))) )$. Assume by contradiction that the inclusion is strict. Then there is a non-trivial subgroup $A\subseteq H_t^{\bar{\mathfrak d}}(\pi(\Omega_t(\ZZ(G))) )$ such that $H_t^{\bar{\mathfrak d}}(\pi(\Omega_t(\ZZ(G))) )= A\oplus \pi( H_t^{\mathfrak d}(G))$, so that $A$ must be homocyclic with exponent $p^t$. Take an element $x\in A$ with order $p^t$. Then there is some element $y\in \Omega_t(\ZZ(G))$ such that $\pi(y)=x$. The order of $y$ is also $p^t$. If $y\in \Phi(G)$ then   $y^{p^{t-1}}\in \mho_{t}(G)G'$, so that $x^{p^{t-1}}=\pi(y^{p^{t-1}})=1$, a contradiction. Moreover   $\GEN{y}\cap H_t^{\mathfrak d}(G)=1$, as $\pi(\GEN{y}\cap H_t^{\mathfrak d}(G))  =\GEN{x}\cap \pi(H_t^{\mathfrak d}(G))=1$.  Now observe that $\pi(\GEN{y}\oplus H_t^{\mathfrak d}(G))=\GEN{x}\oplus \pi(H_t^{\mathfrak d}(G))$, so that $\pi$ is also injective over $\GEN{y}\oplus H_t^{\mathfrak d}(G)$, i.e., $(\GEN{y}\oplus H_t^{\mathfrak d}(G) )\cap \mho_t(G)G'=1$. Finally observe that if $y\in \Phi(G)H_t^{\mathfrak d}(G)$ then $y^{t-1}\in \mho_t(G)G'H_t^{\mathfrak d}(G)$, so that $1\neq x^{p^{t-1}}=\pi(y^{t-1})\in \GEN{x}\cap \pi(H_t^{\mathfrak d}(G))=1$, a contradiction. Hence the rank of $(\GEN{y}\oplus H_t^{\mathfrak d}(G))\Phi(G)/\Phi(G)$ is the rank of $\GEN{y}\oplus H_t^{\mathfrak d}(G)$. Therefore by \Cref{lemma:directfactorNuevo} there exists a subgroup $S$ of $G$ such that
 $ G=S\oplus \GEN{y}\oplus H_t^{\mathfrak d}(G)  $.
 But now we can extend this decomposition to a homocyclic decomposition $\tilde {\mathfrak d}$ of $G$ verifying that $  \GEN{y}\oplus H_t^{\mathfrak d}(G)\subseteq H_t^{\tilde {\mathfrak d}}(G)\cong H_t^{\mathfrak d}(G)$, a contradiction.

 \end{proof}

\begin{lemma}\label{lemma:AbGDecomposition} One has
	$$\text{Ab}(G)\cong \bigoplus_{i\geq 1}  H_i\left( \frac{\Omega_i(\ZZ(G))\mho_i(G)G'}{  \mho_i(G)G' }\right) .$$
\end{lemma}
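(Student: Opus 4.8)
The plan is to decompose $\text{Ab}(G)$ according to its homocyclic components and identify each one using the previous lemma. By the very definition of the homocyclic decomposition in \eqref{decomposition}, the maximal abelian direct factor satisfies $\text{Ab}(G)\cong \bigoplus_{i\geq 1} H_i(G)$, since $\text{Ab}^{\mathfrak d}(G)=A_1\oplus\dots\oplus A_k$ and each $A_i$ is precisely the homocyclic component $H_i^{\mathfrak d}(G)$ of exponent $p^i$. Thus the statement reduces to showing that, for each fixed $i\geq 1$,
\begin{equation*}
H_i(G)\cong H_i\left(\frac{\Omega_i(\ZZ(G))\mho_i(G)G'}{\mho_i(G)G'}\right).
\end{equation*}

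First I would observe that this is exactly the content of \Cref{lemma:HPiOmega} applied with $t=i$. That lemma establishes the isomorphism between the $t$-th homocyclic component of $G$ and the $t$-th homocyclic component of the quotient $\Omega_t(\ZZ(G))\mho_t(G)G'/\mho_t(G)G'$. So the entire proof amounts to invoking \Cref{lemma:HPiOmega} once for each value of $i$, and then assembling the direct sum.

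The one point requiring a moment of care is making the direct-sum decomposition $\text{Ab}(G)\cong\bigoplus_{i\geq 1} H_i(G)$ explicit and well-defined. This follows because a homocyclic decomposition $\mathfrak d$ of $G$ groups together, for each exponent $p^i$, all the cyclic abelian direct factors of that exponent into the single homocyclic block $A_i=H_i^{\mathfrak d}(G)$; the abelian part is then the direct sum of these blocks over all $i$. The Krull--Remak--Schmidt theorem guarantees, as already noted after \eqref{decomposition}, that the isomorphism type of each $H_i(G)$ is independent of $\mathfrak d$, so the decomposition is canonical up to isomorphism. I do not expect any genuine obstacle here: the work has been fully absorbed into \Cref{lemma:HPiOmega}, and this final statement is essentially a bookkeeping corollary that sums that lemma over all exponents. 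A short proof suffices.

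\begin{proof}
Fix a homocyclic decomposition $\mathfrak d$ of $G$ as in \eqref{decomposition}. By definition, $\text{Ab}^{\mathfrak d}(G)=A_1\oplus\dots\oplus A_k$, where each $A_i=H_i^{\mathfrak d}(G)$ is the homocyclic component of $G$ of exponent $p^i$ (with $A_i=1$ allowed). Hence $\text{Ab}(G)\cong\bigoplus_{i\geq 1} H_i(G)$, and by the Krull--Remak--Schmidt theorem the isomorphism type of each $H_i(G)$ does not depend on $\mathfrak d$. Applying \Cref{lemma:HPiOmega} with $t=i$ for each $i\geq 1$ yields
\begin{equation*}
H_i(G)\cong H_i\left(\frac{\Omega_i(\ZZ(G))\mho_i(G)G'}{\mho_i(G)G'}\right),
\end{equation*}
and taking the direct sum over all $i\geq 1$ gives the claim.
\end{proof}
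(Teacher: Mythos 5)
Your proof is correct and follows exactly the same route as the paper: the paper's own proof is the one-line observation that $\text{Ab}(G)\cong\bigoplus_{i\geq 1} H_i(G)$ combined with an application of \Cref{lemma:HPiOmega} for each $i$. Your additional remarks on Krull--Remak--Schmidt merely make explicit what the paper leaves implicit, so there is nothing to change.
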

\begin{proof} 
	Since $\text{Ab}(G)\cong  \bigoplus_{i\geq 1} H_i(G)$,  \Cref{lemma:HPiOmega} yields  the result. 
\end{proof}

The following is the same as \Cref{mainprop}.
\begin{proposition}\label{lemma:abelianfactor}
	Let $G$ and $H$ finite $p$-groups such that $kG\cong kH$. Then $\text{\rm Ab}(G)\cong \text{\rm  Ab}(H)$. 
\end{proposition}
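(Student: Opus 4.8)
The plan is to feed the structural identity of \Cref{lemma:AbGDecomposition} into the canonicity results of \Cref{section:transfer}. Writing $Q_i(G)=\frac{\Omega_i(\ZZ(G))\mho_i(G)G'}{\mho_i(G)G'}$, that lemma gives $\text{Ab}(G)\cong\bigoplus_{i\geq 1}H_i(Q_i(G))$ and the analogous decomposition for $H$. Since $H_i(\cdot)$ is a group-theoretic invariant, it is enough to prove $Q_i(G)\cong Q_i(H)$ for every $i\geq 1$; applying $H_i$ and summing over $i$ then yields $\text{Ab}(G)\cong\text{Ab}(H)$. At the outset I may replace the given algebra isomorphism by one preserving the augmentation, since the augmentation ideal is the Jacobson radical of $kG$ and is therefore preserved by any isomorphism.

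First I would check that the subgroup $\Omega_i(\ZZ(G))\mho_i(G)G'$ is $k$-canonical. Fix $i$ and take $t=i$ in the two identities recorded in the Examples: \eqref{Salim1} gives $\phi(\augNor{\mho_i(G)G'}{G})=\augNor{\mho_i(H)H'}{H}$, and \eqref{eq:example2} gives $\phi(\augNor{\Omega_i(\ZZ(G))G'}{G})=\augNor{\Omega_i(\ZZ(H))H'}{H}$. Because the relative augmentation ideal of a product of normal subgroups equals the sum of their relative augmentation ideals, adding these two identities yields
$$\phi\left(\augNor{\Omega_i(\ZZ(G))\mho_i(G)G'}{G}\right)=\augNor{\Omega_i(\ZZ(H))\mho_i(H)H'}{H}.$$

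The quotient isomorphism is then immediate from the transfer machinery. I would apply \Cref{JenningsNor}\eqref{JenningsNor2-1/2} with $L_\Gamma=\Omega_i(\ZZ(\Gamma))\mho_i(\Gamma)\Gamma'$ and $N_\Gamma=\mho_i(\Gamma)\Gamma'$: by the previous paragraph both assignations are $k$-canonical, and $N_\Gamma\supseteq\Gamma'$, so the hypotheses are met. Since $N_\Gamma\subseteq L_\Gamma$ we have $L_\Gamma N_\Gamma=L_\Gamma$, and hence the conclusion $L_GN_G/N_G\cong L_HN_H/N_H$ reads precisely as $Q_i(G)\cong Q_i(H)$, which is what was needed. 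Combining this with \Cref{lemma:AbGDecomposition} closes the argument.

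I do not anticipate a genuine obstacle at this final stage: all of the real work has already been absorbed into the chain of lemmas culminating in \Cref{lemma:AbGDecomposition} (the detection of homocyclic components) and into the Transfer Lemma. The single point that deserves attention is recognizing the group $\Omega_i(\ZZ(G))\mho_i(G)G'$ occurring in the decomposition as a product of two subgroups whose relative augmentation ideals are already known to be canonical, so that canonicity of its own relative augmentation ideal follows simply by taking sums; once this is observed the proof is a direct assembly of results proved earlier.
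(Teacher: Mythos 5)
Your proposal is correct and follows essentially the same path as the paper's proof: establish that the subgroup assignations $\Gamma\mapsto\mho_t(\Gamma)\Gamma'$ and $\Gamma\mapsto\Omega_t(\ZZ(\Gamma))\mho_t(\Gamma)\Gamma'$ are $k$-canonical, apply \Cref{JenningsNor}\eqref{JenningsNor2-1/2} with $N_\Gamma=\mho_t(\Gamma)\Gamma'$ and $L_\Gamma=\Omega_t(\ZZ(\Gamma))\mho_t(\Gamma)\Gamma'$ to get the quotient isomorphism, and finish with \Cref{lemma:AbGDecomposition}. The only, immaterial, difference is that the paper obtains $\phi\left(\augNor{\Omega_t(\ZZ(G))\mho_t(G)G'}{G}\right)=\augNor{\Omega_t(\ZZ(H))\mho_t(H)H'}{H}$ in one stroke from \Cref{mainlemma}\eqref{mainlemma2} applied with $N_\Gamma=\mho_t(\Gamma)\Gamma'$ (justified by \eqref{Salim1}), whereas you assemble the same identity as the sum of ideals $\augNor{\Omega_t(\ZZ(G))G'}{G}+\augNor{\mho_t(G)G'}{G}$ using \eqref{eq:example2} and \eqref{Salim1}, both of which come from the same Transfer Lemma.
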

\begin{proof} Let $\phi:kG \to kH$ be an isomorphism preserving the augmentation.   By \eqref{Salim1} we can apply \Cref{mainlemma}\eqref{mainlemma2}    for $N_\Gamma=\mho_t(\Gamma)\Gamma'$and derive that $
	\phi\left(\augNor{\Omega_t(\ZZ(G))\mho_t(G)G'}{G}\right)=\augNor{\Omega_t(\ZZ(H))\mho_t(H)H'}{H}$. By the same reason, we can apply \Cref{JenningsNor}\eqref{JenningsNor2-1/2} with $N_\Gamma=\mho_t(\Gamma)\Gamma'$ and $L_\Gamma=\Omega_t(\ZZ(\Gamma))\mho_t(\Gamma)\Gamma'$ to deduce that 
	$$\frac{\Omega_t(\ZZ(G))\mho_t(G)G'}{ \mho_t(G)G' }\cong \frac{\Omega_t(\ZZ(H)) \mho_t(H)H'}{\mho_t(H)H'}$$
	for each $t\geq 1$. In particular
	$$ H_t\left(\frac{\Omega_t(\ZZ(G))\mho_t(G)G'}{ \mho_t(G)G'}\right) \cong H_t\left( \frac{\Omega_t(\ZZ(H)) \mho_t(H)H'}{\mho_t(H)H'}\right).$$
	Thus the result follows from \Cref{lemma:AbGDecomposition}. 
\end{proof}

In particular the previous lemma  is equivalent to 
$ H_t(G)\cong H_t(H)  $ for each $t$, provided that $kG\cong kH$. Hence   $|H_t(G)|=|H_t(H)|$.

\subsection{Proof of \Cref{maintheorem}} We will need another pair  of lemmas about homocyclic components, as well as to recover some ideas from \cite{MSS21}.

\begin{lemma}\label{lemma:injectivemap}
	Let $T$ be a subgroup of $\Omega_t(\ZZ(G))$ such that  $d(T)=d(T\Phi(G)/\Phi(G)) $ and such that the restriction of $\lambda_G^{t-1}$ to $T\Phi(G)/\Phi(G)$ is injective. Then  $T$ is homocyclic of exponent $p^t$ and $G=S\oplus T$ for some subgroup $S$ of $G$.
\end{lemma}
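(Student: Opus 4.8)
The plan is to verify the hypotheses of \Cref{lemma:directfactorNuevo} and then invoke it directly. That lemma requires three things about $T$: that it be homocyclic of exponent $p^t$, that $d(T)=d(T\Phi(G)/\Phi(G))$, and that $T\cap\mho_t(G)G'=1$. The middle condition is a hypothesis here, so the real work is to extract homocyclicity and the trivial-intersection property from the injectivity of $\lambda_G^{t-1}$ on $T\Phi(G)/\Phi(G)$. Throughout I write $N=\mho_t(G)G'$. The first observation, used repeatedly, is that the generation hypothesis forces $T\cap\Phi(G)=\Phi(T)$: since $T^p\subseteq G^p\subseteq\Phi(G)$ one always has $\Phi(T)=T^p\subseteq T\cap\Phi(G)$, and the natural surjection $T/\Phi(T)\twoheadrightarrow T\Phi(G)/\Phi(G)$ between $\F_p$-spaces of equal dimension $d(T)$ must then be an isomorphism. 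Consequently $T\Phi(G)/\Phi(G)$ may be identified with $T/\Phi(T)$, and under this identification the restriction of $\lambda_G^{t-1}$ becomes the map $T/\Phi(T)\to\mho_{t-1}(G)G'/N$ sending $x\Phi(T)\mapsto x^{p^{t-1}}N$.

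First I would prove that $T$ is homocyclic of exponent $p^t$. The relevant elementary criterion is that a finite abelian $p$-group of exponent dividing $p^t$ is homocyclic of exponent $p^t$ precisely when the $p^{t-1}$-power map induces an \emph{injection} $T/\Phi(T)\to T$, $x\Phi(T)\mapsto x^{p^{t-1}}$ (equivalently $\Omega_{t-1}(T)=\Phi(T)$; the rank of the image of the power map then equals $d(T)$, which forces every invariant factor to be exactly $p^t$). To establish this injectivity, suppose $x\in T$ satisfies $x^{p^{t-1}}=1$. Then $\lambda_G^{t-1}(x\Phi(G))=x^{p^{t-1}}N=N$ is the trivial element of the target, so injectivity of $\lambda_G^{t-1}$ on $T\Phi(G)/\Phi(G)$ yields $x\in\Phi(G)\cap T=\Phi(T)$. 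Hence the induced power map on $T/\Phi(T)$ is injective and $T$ is homocyclic of exponent $p^t$.

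Next I would show $T\cap N=1$, which I expect to be the most delicate point, as here the single injectivity hypothesis must be leveraged through the structure of $T$ obtained in the previous step. Suppose for contradiction that some $x\in T\cap N$ is nontrivial, of order $p^s$ with $s\geq 1$, and set $y=x^{p^{s-1}}$, an element of order $p$ lying in $T\cap N$. Because $T$ is now known to be homocyclic of exponent $p^t$, its elements of order dividing $p$ are exactly the $p^{t-1}$-th powers $\mho_{t-1}(T)=T^{p^{t-1}}$, so $y=w^{p^{t-1}}$ for some $w\in T$. Then $\lambda_G^{t-1}(w\Phi(G))=w^{p^{t-1}}N=yN=N$ since $y\in N$, and injectivity of $\lambda_G^{t-1}$ forces $w\in\Phi(G)\cap T=\Phi(T)=T^p$. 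Writing $w=v^p$ with $v\in T$ then gives $y=w^{p^{t-1}}=v^{p^t}=1$ (as $v\in\Omega_t(\ZZ(G))$), contradicting that $y$ has order $p$. Hence $T\cap\mho_t(G)G'=1$.

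Having verified that $T$ is homocyclic of exponent $p^t$, that $d(T)=d(T\Phi(G)/\Phi(G))$, and that $T\cap\mho_t(G)G'=1$, I would apply \Cref{lemma:directfactorNuevo} to conclude $G=S\oplus T$ for some subgroup $S$, completing the proof. The conceptual heart of the argument, and the step requiring the most care, is the realization that the hypothesis on the \emph{single} map $\lambda_G^{t-1}$ simultaneously pins down the internal isomorphism type of $T$ and its transversality to $\mho_t(G)G'$; the power-map manipulations in the intersection argument are precisely where careless valuation bookkeeping or an order-of-quantifiers slip could derail the proof.
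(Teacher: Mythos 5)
Your proposal is correct and takes essentially the same approach as the paper: both first deduce $T\cap\Phi(G)=\Phi(T)$ from the rank hypothesis, extract homocyclicity of exponent $p^t$ from the injectivity of $\lambda_G^{t-1}$, then prove $T\cap\mho_t(G)G'=1$, and finally invoke \Cref{lemma:directfactorNuevo}. The only (immaterial) difference is in the intersection step, where the paper writes a nontrivial $x\in T\cap\mho_t(G)G'$ as $g^{p^e}$ with $g\in T\setminus\Phi(G)$ and $e<t$, so that $g^{p^{t-1}}=x^{p^{t-1-e}}\in\mho_t(G)G'$ contradicts injectivity directly, whereas you pass to an order-$p$ element and use $\Omega_1(T)=\mho_{t-1}(T)$; both manipulations are valid.
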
 
\begin{proof}Observe that   $d(T)=d(T\Phi(G)/\Phi(G))$ implies that $\Phi(G)\cap T=\Phi(T)$.  As $T\subseteq  \Omega_t(\ZZ(G))$, to show that it is homocyclic of exponent $p^t$ it suffices to prove that every element in $T\setminus \Phi(T)=T\setminus \Phi(G)$ has order greater than $p^{t-1}$, and that is a direct consequence of the injectivity of $\lambda_G^{t-1}$.

	We claim that $T\cap \mho_t(G)G'=1$. Indeed, if $1\neq x\in T\cap \mho_t(G)G'$ then there exist  some $y\in G$ and $z\in G'$ such that $x=y^{p^t}z$. As $x\in T\subseteq  \Omega_t( \ZZ(G))$, there is some integer $e$ and some element $g\in T\setminus \Phi(T)=T\setminus \Phi(G)$ such that $g^{p^e}=x$, with $e<t$. Therefore $g^{p^{t-1}}=x^{p^{t-1-e}}=(y^{p^t}z)^{p^{t-1-e}}= y^{p^{2t-e-1}}z'$ for some $z'\in  G'$, that is, $g^{p^{t-1}}\in \mho_t(G)G'$, thus $1\neq g \Phi(G)$ belongs to the kernel of $\lambda_G^{t-1}$, a contradiction.  So the claim follows, and we can apply  \Cref{lemma:directfactorNuevo} to derive the result.
\end{proof}

\begin{lemma}\label{lemma:complement}
Let $T$ be a subgroup of $\Omega_t(\ZZ(G))$ such that  $d(T)=d(T\Phi(G)/\Phi(G)) $. Then the following conditions are equivalent:
\begin{enumerate}
	\item \label{lemma:complement1}$\Omega_t(\ZZ(G))\Phi(G)/\Phi(G)$ admits the following direct product decomposition 
	$$\frac{\Omega_t(\ZZ(G))\Phi(G)}{\Phi(G)}=\frac{T\Phi(G)}{\Phi(G)}\oplus \ker \lambda_G^{t-1}  .$$ 
	\item There exists a homocyclic decomposition $\mathfrak d$ of $G$ such that $T=H_t^{\mathfrak d}(G)$.
\end{enumerate}
	
\end{lemma}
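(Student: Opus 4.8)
The plan is to prove the two implications separately, both resting on a single auxiliary fact, which I regard as the crux: \emph{if a finite $p$-group $\Gamma$ has no abelian direct factor, then $\lambda_\Gamma^{t-1}$ is the zero map}. I would establish this first, by contradiction. Suppose $\lambda_\Gamma^{t-1}(x\Phi(\Gamma))\neq 1$ for some $x\in\Omega_t(\ZZ(\Gamma))$. Since $\lambda_\Gamma^{t-1}$ is well defined on the quotient by $\Phi(\Gamma)$, we must have $x\notin\Phi(\Gamma)$, so $T_0:=\GEN{x}$ is a nontrivial cyclic subgroup with $d(T_0)=d(T_0\Phi(\Gamma)/\Phi(\Gamma))=1$ on which $\lambda_\Gamma^{t-1}$ is injective (a one–dimensional domain whose generator has nonzero image). \Cref{lemma:injectivemap} then yields $\Gamma=S\oplus T_0$ with $T_0\cong C_{p^t}$ abelian, contradicting the hypothesis. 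Reformulated for $N=\text{NAb}(G)$, this says that $x_0^{p^{t-1}}\in\mho_t(N)N'\subseteq\mho_t(G)G'$ for every $x_0\in\Omega_t(\ZZ(N))$.

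For \eqref{lemma:complement1}$\Leftarrow$\eqref{lemma:complement1}'s counterpart, i.e.\ (2)$\Rightarrow$(1), fix a homocyclic decomposition $\mathfrak d$ with $G=N\oplus A_1\oplus\cdots$, where $N=\text{NAb}(G)$ and $T=A_t=H_t^{\mathfrak d}(G)$. The intersection $T\Phi(G)/\Phi(G)\cap\ker\lambda_G^{t-1}$ is trivial because $\lambda_G^{t-1}$ is injective on $T\Phi(G)/\Phi(G)$ by \Cref{list}\eqref{l4}. For the sum, I would use the decomposition $\Omega_t(\ZZ(G))=\Omega_t(\ZZ(N))\oplus A_1\oplus\cdots\oplus A_t\oplus\Omega_t(A_{t+1})\oplus\cdots$: given $x\in\Omega_t(\ZZ(G))$, peel off its $A_t$-component $a\in T$, so that $xa^{-1}$ lies in $\ker\lambda_G^{t-1}$. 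This is checked term by term on the central product $xa^{-1}$: the factors in $A_i$ with $i<t$ have trivial $p^{t-1}$-th power; each factor in $\Omega_t(A_i)$ with $i>t$ has $p^{t-1}$-th power inside $\mho_t(G)$; and the $\Omega_t(\ZZ(N))$-part is annihilated modulo $\mho_t(G)G'$ exactly by the vanishing fact above. Hence $x\Phi(G)\in T\Phi(G)/\Phi(G)+\ker\lambda_G^{t-1}$, giving the direct decomposition in \eqref{lemma:complement1}.

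For (1)$\Rightarrow$(2), the decomposition in \eqref{lemma:complement1} forces $T\Phi(G)/\Phi(G)\cap\ker\lambda_G^{t-1}=0$, so $\lambda_G^{t-1}$ is injective on $T\Phi(G)/\Phi(G)$; combined with $d(T)=d(T\Phi(G)/\Phi(G))$, \Cref{lemma:injectivemap} shows $T$ is homocyclic of exponent $p^t$ and a direct factor $G=S\oplus T$. I would extend this to a homocyclic decomposition $\mathfrak d$ with $T\subseteq H_t^{\mathfrak d}(G)$. To promote this inclusion to equality it suffices to compare ranks: applying the already-proved implication (2)$\Rightarrow$(1) to the component $H_t^{\mathfrak d}(G)$ gives $\dim\ker\lambda_G^{t-1}=\dim\bigl(\Omega_t(\ZZ(G))\Phi(G)/\Phi(G)\bigr)-d(H_t^{\mathfrak d}(G))$, while hypothesis (1) gives the same quantity with $d(T)$ in place of $d(H_t^{\mathfrak d}(G))$. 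Thus $d(T)=d(H_t^{\mathfrak d}(G))$, and since $T\subseteq H_t^{\mathfrak d}(G)$ are both homocyclic of exponent $p^t$, they coincide.

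The main obstacle is the vanishing fact of the first paragraph; it is what guarantees that the entire image of $\lambda_G^{t-1}$ is accounted for by the homocyclic component $H_t(G)$, and it is precisely here that the absence of abelian direct factors in $\text{NAb}(G)$ is used through \Cref{lemma:injectivemap}. Once it is available, both directions reduce to routine bookkeeping of the direct-sum splitting of $\Omega_t(\ZZ(G))$, elementary dimension counts, and the Krull--Remak--Schmidt invariance of the homocyclic components.
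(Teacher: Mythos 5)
Your proof is correct, and its core mechanism is genuinely different from the paper's. Both arguments prove one implication directly and bootstrap the other from it, but in opposite orders, and the real difference lies in (2)$\Rightarrow$(1). The paper proves (1)$\Rightarrow$(2) first --- essentially your third paragraph, except that the rank comparison is done by intersecting $H_t^{\mathfrak d}(G)\Phi(G)/\Phi(G)$ with $\ker\lambda_G^{t-1}$ and contradicting \Cref{list}\eqref{l4} --- and then derives (2)$\Rightarrow$(1) indirectly: it lifts a basis of an arbitrary vector-space complement of $\ker \lambda_G^{t-1}$ to a subgroup $S\subseteq \Omega_t(\ZZ(G))$ satisfying condition (1), applies the already-proved implication to conclude $S=H_t^{\bar{\mathfrak d}}(G)$ for some decomposition $\bar{\mathfrak d}$, and finishes by the Krull--Remak--Schmidt invariance of $d(H_t(G))$ together with \Cref{list}\eqref{l2}; it never computes the kernel and never touches the non-abelian part of $G$. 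Your route instead rests on a vanishing fact the paper nowhere states: $\lambda_\Gamma^{t-1}=0$ whenever $\Gamma$ has no abelian direct factor, obtained by feeding cyclic subgroups $\GEN{x}$ into \Cref{lemma:injectivemap} (correct: a homomorphism of elementary abelian $p$-groups is injective on a one-dimensional subspace as soon as the generator has nonzero image). Granting this, your factor-by-factor check along $\Omega_t(\ZZ(G))=\Omega_t(\ZZ(N))\oplus A_1\oplus\dots\oplus A_t\oplus\Omega_t(A_{t+1})\oplus\dots$ is also correct (for $i>t$ one has $a_i\in\mho_{i-t}(A_i)$, hence $a_i^{p^{t-1}}\in\mho_{i-1}(A_i)\subseteq\mho_t(G)$), and your (1)$\Rightarrow$(2) then closes the loop without circularity. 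What each approach buys: yours is constructive and yields a sharper structural statement of independent interest --- $\ker\lambda_G^{t-1}$ is exactly the image modulo $\Phi(G)$ of the factors complementary to $H_t^{\mathfrak d}(G)$, so the image of $\lambda_G^{t-1}$ is wholly accounted for by $H_t(G)$ --- at the cost of an extra lemma; the paper's is a bit shorter, since Krull--Remak--Schmidt invariance of $d(H_t(G))$ does the counting for free. Two cosmetic points: your $N$ should be $\text{\rm NAb}^{\mathfrak d}(G)$, the specific subgroup given by the chosen decomposition rather than the isomorphism-type invariant, and the fact that it admits no abelian direct factor is itself a (routine) Krull--Remak--Schmidt argument worth a sentence; neither affects correctness.
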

\begin{proof}  Assume \eqref{lemma:complement1}. This implies that the restriction of $\lambda_G^{t-1}$ to $T\Phi(G)/\Phi(G)$ is injective. Thus by \Cref{lemma:injectivemap} we have that $G=S\oplus T$ for some subgroup $S$ of $G$, and $T$ is homocyclic of exponent $p^t$. Hence there is a  homocyclic decomposition $\mathfrak d$ of $G$ such that $T\subseteq H_t^{\mathfrak d}(G) $. Suppose by contradiction that the inclusion is strict. Then, observing that the elementary abelian group $\Omega_t(\ZZ(G))\Phi(G)/\Phi(G)$ can be seen as a vector space over $\F_p$, by counting dimensions it is clear that  $(H_t^{\mathfrak d}(G)\Phi(G)/\Phi(G)) \cap \ker(\lambda_G^{t-1}) \neq 1$. Hence $\lambda_G^{t-1}$ is not injective over $H_t^{\mathfrak d}(G)$, contradicting \Cref{list}\eqref{l4}.

	Conversely, assume that $T=H_t^{\mathfrak d}(G)$ for some homocyclic decomposition $\mathfrak d$ of $G$.  Since $\lambda_G^{t-1}$ restricted to $T\Phi(G)/\Phi(G)$ is injective by \Cref{list}\eqref{l4}, we have that $(T\Phi(G)/\Phi(G))\cap \ker  \lambda_G^{t-1} =1$. Hence $(T\Phi(G)/\Phi(G))\oplus \ker\lambda_G^{t-1}\subseteq \Omega_t(\ZZ(G))\Phi(G)/\Phi(G)$. It suffices to show that the inclusion is in fact an equality, and to see this we count dimension as $\F_p$-spaces. Take a direct complement $U$ of $\ker \lambda_G^{t-1}$ in $\Omega_t(\ZZ(G))\Phi(G)/\Phi(G)$. Let $x_1,\dots, x_r$ be a basis of $U$. Then we can take elements $y_1,\dots, y_r\in \Omega_t(\ZZ(G))$ such that $y_i\Phi(G)=x_i$. Set $S=\GEN{y_1,\dots, y_r}$. Then $S\subseteq \Omega_t(\ZZ(G))$, $d(S)=r=d(S\Phi(G)/\Phi(G))$ and $S\Phi(G)/\Phi(G)\oplus \ker \lambda_G^{t-1}=\Omega_t(\ZZ(G))\Phi(G)/\Phi(G)$. Thus by the previous paragraph there is a homocyclic decomposition $\bar{\mathfrak d}$ of $G$ such that $S=H_t^{\bar{\mathfrak d}}(G)$. Thus 
	$$ \dim(T\Phi(G)/\Phi(G))\leq \dim(\Omega_t(\ZZ(G))\Phi(G) /\Phi(G))-\dim(\ker \lambda_G^{t-1})= \dim(H_t^{ \bar{\mathfrak d}}(G)\Phi(G)/\Phi(G)).$$
By  \Cref{list}\eqref{l2}  the first and the last terms in the previous expression equal $d(H_t^{\mathfrak d}(G)) =d(H_t(G))$, so the inequality is in fact an equality and the result follows.  
	 
\end{proof}


\begin{lemma} \cite[Lemma 4.2]{MSS21} \label{lemma:semidirect1}
	Let $N$ and $L$ be subgroups of $G$ such that $G$ is the direct product
	$G =N \times L$.   Then for each positive integer $n$
	the following equality holds:
  	$$\aug{G}^n= \aug{N}\aug{G}^{n-1}\oplus \aug{L}^n.$$ 
\end{lemma}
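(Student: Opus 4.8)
The plan is to induct on $n$, using the splitting of $kG$ afforded by $G=N\times L$ together with \Cref{lemma:MSS21} to control directness. Throughout I regard $\aug{N}$ and $\aug{L}$ as subspaces of $kG$ via $kN,kL\subseteq kG$; the one structural fact I will lean on repeatedly is that, since every element of $N$ commutes with every element of $L$, the generators $n-1$ and $l-1$ satisfy $(n-1)(l-1)=(l-1)(n-1)$, so $\aug{N}$ and $\aug{L}$ commute as subspaces, i.e.\ $\aug{N}\aug{L}=\aug{L}\aug{N}$ inside $kG$.

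For the base case $n=1$ the asserted identity reads $\aug{G}=\augNor{N}{G}\oplus\aug{L}$, adopting the convention $\aug{G}^0=kG$ and observing that $\aug{N}\,kG=\augNor{N}{G}$ because $N$ is a direct factor, hence normal. I would obtain this by splitting the projection $kG\to kL$ (whose kernel is $\augNor{N}{G}$) through the inclusion $kL\hookrightarrow kG$, which gives $kG=\augNor{N}{G}\oplus kL=\augNor{N}{G}\oplus k1\oplus\aug{L}$; intersecting with the augmentation ideal, which contains both $\augNor{N}{G}$ and $\aug{L}$ but meets $k1$ trivially, yields $\aug{G}=\augNor{N}{G}\oplus\aug{L}$.

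For the inductive step, assume the identity for $n-1$ with $n\ge 2$. Using the base-case relation $\aug{G}=\augNor{N}{G}+\aug{L}$ I would expand
\[
\aug{G}^n=\aug{G}\,\aug{G}^{n-1}=\augNor{N}{G}\,\aug{G}^{n-1}+\aug{L}\,\aug{G}^{n-1}.
\]
Since $\augNor{N}{G}=\aug{N}\,kG$ and $\aug{G}^{n-1}$ is a two-sided ideal, the first summand is exactly $\aug{N}\aug{G}^{n-1}$. Into the second summand I substitute the inductive hypothesis $\aug{G}^{n-1}=\aug{N}\aug{G}^{n-2}\oplus\aug{L}^{n-1}$, obtaining $\aug{L}\,\aug{G}^{n-1}=\aug{L}\aug{N}\aug{G}^{n-2}+\aug{L}^n$; the commutativity $\aug{L}\aug{N}=\aug{N}\aug{L}$ then folds the cross term into $\aug{N}\aug{L}\aug{G}^{n-2}\subseteq\aug{N}\aug{G}^{n-1}$. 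Collecting terms gives $\aug{G}^n=\aug{N}\aug{G}^{n-1}+\aug{L}^n$.

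It remains to check that this sum is direct, and this is the step I expect to be the real crux. I would note that $\aug{N}\aug{G}^{n-1}\subseteq\augNor{N}{G}$ (because $\aug{N}\subseteq\augNor{N}{G}$ and the latter is a two-sided ideal), while $\aug{L}^n\subseteq\aug{L}\subseteq kL$. Applying \Cref{lemma:MSS21} with its roles assigned as $L\rightsquigarrow N$ and $N\rightsquigarrow L$ gives $\augNor{N}{G}\cap\aug{L}=\augNor{N\cap L}{L}=0$, since $N\cap L=1$. Hence $\aug{N}\aug{G}^{n-1}\cap\aug{L}^n=0$ and the induction closes. The essential inputs are thus the elementwise commutativity of $N$ and $L$, which makes the cross term $\aug{L}\aug{N}\aug{G}^{n-2}$ collapse into $\aug{N}\aug{G}^{n-1}$, and \Cref{lemma:MSS21}, which converts the transversality $N\cap L=1$ into the required triviality of the intersection.
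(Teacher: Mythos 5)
Your proof is correct. Note, however, that the paper does not actually prove this lemma: it imports it verbatim as \cite[Lemma 4.2]{MSS21}, so there is no internal argument to compare yours against; what you have written is a self-contained substitute for that citation. Each step checks out: the base case $kG=\augNor{N}{G}\oplus kL$ via the split projection $kG\to kL$ is standard; the identity $\augNor{N}{G}\aug{G}^{n-1}=\aug{N}kG\aug{G}^{n-1}=\aug{N}\aug{G}^{n-1}$ uses correctly that $\aug{G}^{n-1}$ is an ideal containing its own $kG$-multiples; the collapse of the cross term $\aug{L}\aug{N}\aug{G}^{n-2}$ into $\aug{N}\aug{G}^{n-1}$ is legitimate because $N$ and $L$ commute elementwise, so $(l-1)(n-1)=(n-1)(l-1)$ and hence $\aug{L}\aug{N}=\aug{N}\aug{L}$; and the directness follows from \Cref{lemma:MSS21} applied with the roles of the two (normal, since they are direct factors) subgroups exchanged, giving $\augNor{N}{G}\cap\aug{L}=\augNor{N\cap L}{L}=0$. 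A pleasant feature of your argument is that it uses only tools already present in this paper, whereas one could instead obtain the directness from the tensor decomposition $kG\cong kN\otimes_k kL$, writing $kN=k1\oplus\aug{N}$ so that $\aug{N}\otimes_k kL$ and $k1\otimes_k kL$ meet trivially; the two routes are essentially equivalent, and yours has the advantage of staying entirely inside the paper's own lemma inventory.
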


The following lemma extracts the idea, with essentially the same proof, of \cite[Lemma 4.5]{MSS21}.
\begin{lemma}   \label{lemma:semidirect2}
		Let $N$ and $L$ be subgroups of $G$ such that $G$ is the  direct product
	$G =N \times L$. Let $J$ be a proper ideal of $kG $ such that:
	\begin{enumerate}
		\item \label{lemma:semidirect2.1}$\text{\rm codim}_{kG}(J)=|L|$. 
		\item\label{lemma:semidirect2.2} $J+\aug{G}^2=\aug{N}kG+\aug{G}^2$.
	\end{enumerate}
Then $kG=J\oplus kL$.
\end{lemma}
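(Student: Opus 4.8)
The plan is to prove the single identity $kG = J + kL$ and then extract directness for free from a dimension count. From the codimension hypothesis $\operatorname{codim}_{kG}(J) = |L|$ one has $\dim_k J = |G| - |L|$, while $\dim_k kL = |L|$; thus $\dim_k J + \dim_k kL = |G| = \dim_k kG$. Consequently, once $J + kL = kG$ is established the sum is automatically direct, and $kG = J \oplus kL$ follows. So the codimension hypothesis is used only at this final bookkeeping step, and the real work is to show that $J$ and $kL$ together span $kG$.

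For the spanning I would first record that $kL$ is a vector-space complement of $\augNor{N}{G} = \aug{N}kG$ in $kG$: since $G = N \times L$, the projection $\pi_N \colon kG \to k(G/N)$ restricts to an isomorphism $kL \to k(G/N)$, so $kG = \aug{N}kG \oplus kL$. Feeding in the hypothesis $J + \aug{G}^2 = \aug{N}kG + \aug{G}^2$, which in particular gives $\aug{N}kG \subseteq J + \aug{G}^2$, yields the base case
\begin{equation*}
kG = \aug{N}kG + kL \subseteq J + kL + \aug{G}^2 .
\end{equation*}

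The heart of the argument is then a Nakayama-type induction proving $kG = J + kL + \aug{G}^n$ for all $n \geq 2$; since $\aug{G}$ is the nilpotent Jacobson radical of $kG$, choosing $n$ with $\aug{G}^n = 0$ gives $kG = J + kL$. I expect the main obstacle to be that $kL$ is \emph{not} an ideal, so one cannot feed $J + kL$ directly into Nakayama's lemma. The device that circumvents this is \Cref{lemma:semidirect1}, which splits $\aug{G}^n = \aug{N}\aug{G}^{n-1} + \aug{L}^n$: the summand $\aug{L}^n$ already lies in $kL$, while for the other summand I would use $\aug{N} \subseteq \aug{N}kG$ together with the same hypothesis and the fact that $J$ is an ideal to obtain
\begin{equation*}
\aug{N}\aug{G}^{n-1} \subseteq (\aug{N}kG)\aug{G}^{n-1} \subseteq (J + \aug{G}^2)\aug{G}^{n-1} = J\aug{G}^{n-1} + \aug{G}^{n+1} \subseteq J + \aug{G}^{n+1}.
\end{equation*}
Together these give $\aug{G}^n \subseteq J + kL + \aug{G}^{n+1}$, and substituting this into the inductive hypothesis upgrades $kG = J + kL + \aug{G}^n$ to $kG = J + kL + \aug{G}^{n+1}$, closing the induction. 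The nilpotency of $\aug{G}$ then delivers $kG = J + kL$, whence $kG = J \oplus kL$ by the dimension count above.
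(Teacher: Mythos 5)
Your proof is correct and follows essentially the same route as the paper: both arguments pivot on \Cref{lemma:semidirect1} together with hypothesis \eqref{lemma:semidirect2.2} to trade $\aug{N}$-terms for $J$-terms at the cost of one extra power of $\aug{G}$, and both extract directness at the end from the codimension count. The only difference is presentational: you run an upward, Nakayama-style induction proving $kG=J+kL+\aug{G}^n$ and kill the error term by nilpotency of $\aug{G}$, whereas the paper runs a reverse induction starting at the nilpotency index, proving the graded equality $\aug{G}^n=J\aug{G}^{n-1}+\aug{L}^n$ and then specializing to $n=1$.
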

\begin{proof} Observe that \eqref{lemma:semidirect2.2} easily implies that  	for each $n\geq 1$
	\begin{equation}\label{eq:semidirect}
		J\aug{G}^{n-1}+ \aug{G}^{n+1}=\aug{N}\aug{G}^{n-1}+\aug{G}^{n+1} .
	\end{equation}  
	We claim that 
	$$ \aug{G}^n=J\aug{G}^{n-1}+\aug{L}^n $$
	for each $n\geq 1$.  Let $c$ the smallest integer such that $\aug{G}^c=0$. If $n\geq c$ then both terms in the equality are zero since $J\subseteq \aug{G}$, thus it holds trivially. Assume by reverse induction that $\aug{G}^{n+1}=J\aug{G}^n+\aug{L}^{n+1}$. Then
		\begin{eqnarray*} 
	     \aug{G}^n&=&\aug{G}^n+\aug{G}^{n+1} \qquad   \\
		&=&\aug{N}\aug{G}^{n-1} + \aug{L}^n+\aug{G}^{n+1}  \hspace*{2cm} 	\textrm{(by \Cref{lemma:semidirect1})} 	 \\
		  &=&J\aug{G}^{n-1}+\aug{L}^n+\aug{G}^{n+1}  \hspace*{2.5cm}  \textrm{(by \eqref{eq:semidirect})}    \\
			&=& J\aug{G}^{n-1}+\aug{L}^n+   J\aug{G}^n+\aug{L}^{n+1}  \hspace*{1cm}  \textrm{(by the induction hypothesis)}  \\
	 	&=&J\aug{G}^{n-1}+\aug{L}^n.
	\end{eqnarray*} 
Thus the claim follows. In particular, for $n=1$ we have that $\aug{G}=J+\aug{L}$. So that $kG=J+kL$, and \eqref{lemma:semidirect2.1} guarantees that the sum is direct. 
\end{proof}

\begin{lemma}\label{lemma:normal+jennings}
	If  $N$ is a normal subgroup of $G$ then 
	$ \left(1+\augNor{N}{G}+\aug{G}^n\right)\cap G= \M_n(G)N$.
\end{lemma}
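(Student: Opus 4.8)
The plan is to pass to the quotient $G/N$ and reduce everything to the defining property of the Jennings series there. Let $\pi_N\colon kG\to k(G/N)$ be the natural projection, whose kernel is $\augNor{N}{G}$ and which satisfies $\pi_N(\aug{G})=\aug{G/N}$, hence $\pi_N(\aug{G}^n)=\aug{G/N}^n$ since $\pi_N$ is a surjective ring homomorphism. The first observation I would record is that, for any $x\in kG$, one has $x\in \augNor{N}{G}+\aug{G}^n$ if and only if $\pi_N(x)\in \aug{G/N}^n$: the forward implication is clear because $\augNor{N}{G}=\ker\pi_N$, and conversely, if $\pi_N(x)=\pi_N(a)$ for some $a\in\aug{G}^n$, then $x-a\in\ker\pi_N=\augNor{N}{G}$, so $x\in\augNor{N}{G}+\aug{G}^n$.

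First I would apply this to $x=g-1$ for $g\in G$. Since $\pi_N(g-1)=gN-1$, the element $g$ lies in $1+\augNor{N}{G}+\aug{G}^n$ if and only if $gN-1\in\aug{G/N}^n$, which by the very definition of the Jennings series means $gN\in \M_n(G/N)$. Therefore $\left(1+\augNor{N}{G}+\aug{G}^n\right)\cap G$ is exactly the preimage in $G$ of $\M_n(G/N)$ under the canonical projection $G\to G/N$.

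It then remains to identify this preimage, that is, to prove the purely group-theoretic identity $\M_n(G/N)=\M_n(G)N/N$, since then the preimage of $\M_n(G/N)$ is precisely $\M_n(G)N$ and the lemma follows. This is the only real point of the argument, and it follows from the Jennings--Lazard product formula $\M_n(\Gamma)=\prod_{ip^j\ge n}\mho_j(\gamma_i(\Gamma))$ recalled in \Cref{SectionPreliminaries}: writing $\theta\colon G\to G/N$ for the projection, one has $\gamma_i(G/N)=\theta(\gamma_i(G))$ and $\mho_j(\theta(K))=\theta(\mho_j(K))$ for every subgroup $K$, because surjective homomorphisms commute with the formation of commutators and of $p$-th powers. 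Substituting these into the product formula gives
$$\M_n(G/N)=\prod_{ip^j\ge n}\mho_j(\gamma_i(G/N))=\theta\!\left(\prod_{ip^j\ge n}\mho_j(\gamma_i(G))\right)=\theta(\M_n(G))=\M_n(G)N/N,$$
as required. The main thing to be careful about is precisely this compatibility of the Jennings series with quotients; once it is in hand, the rest is the formal transfer through $\pi_N$ described above.
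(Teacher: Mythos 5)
Your proof is correct and takes essentially the same route as the paper: both arguments pass to the projection $\pi_N\colon kG\to k(G/N)$, recognize the left-hand side through the defining property $\M_n(G/N)=\left\{gN : gN-1\in\aug{G/N}^n\right\}$, and finish using the compatibility of the Jennings series with quotients. The only difference is one of detail: you prove the equality $\M_n(G/N)=\M_n(G)N/N$ in full via the Jennings--Lazard product formula, while the paper's proof just invokes the inclusion $\M_n(G/N)\subseteq \M_n(G)N/N$ as known and handles the trivial inclusion separately.
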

\begin{proof}
	The right-to-left inclusion is trivial, so we prove the converse one. Let $\pi_N :kG\to k(G/N)$ be the natural projection and assume that $g\in \left(1+\augNor{N}{G}+\aug{G}^n\right)\cap G$. Then $\pi_N(G)\in \left(1+\aug{G/N}^n\right)\cap (G/N)= \M_n(G/N)\subseteq\M_n(G)N/N$, so that $g\in \M_n(G)N$.
\end{proof}

We will use that the following map
$$
	\Lambda_G^{t-1}:\frac{\aug{G}}{\aug{G}^2} \to \frac{\aug{G}^{p^{t-1}}}{\aug{G}^{p^{t-1}+1}} , \quad 
	 x+\aug{G}^2 \mapsto x^{p^{t-1}}+\aug{G}^{p^{t-1} +1},
$$
  already introduced in \cite{Passman1965p4}, is well defined. 
 Moreover there are injective maps $$
	\psi_n :\frac{\M_n(G)}{\M_{n+1}(G)} \to \frac{\aug{G}^n}{\aug{G}^{n+1}}, \quad  
	 x\M_{n+1}(G) \mapsto x-1 +\aug{G}^{n+1} . 
$$  \underline{From now on we assume that $k=\F_p$.} Then $\psi_1$ is an isomorphism (see, for example, \cite[Proposition III.1.15]{Seh78}).   Let $N$ be a normal subgroup of $G$. Therefore $\psi_1(N\Phi(G)/\Phi(G))=(\augNor{N}{G}+\aug{G}^2)/\aug{G}^2$.  Furthermore,  from  \Cref{lemma:normal+jennings} it follows easily  that 
$$
	\psi_n^N :\frac{\M_n(G)N}{\M_{n+1}(G)N} \to \frac{\aug{G}^n+ \augNor{N}{G}}{\aug{G}^{n+1}+ \augNor{N}{G}}, \quad
	x\M_{n+1}(G)N \mapsto x-1 +\aug{G}^{n+1}+\augNor{N}{G}  
$$
is also injective. 
Therefore we have a commutative diagram 
\begin{equation*}
	\xymatrix{
		\frac{G}{\Phi(G) } \ar[d]_-{\psi_1} \ar[r]^-{\tilde \lambda_G^{t-1}} & \frac{\M_{p^{t-1}}(G)N}{\M_{p^{t-1}+1}(G)N} \ar[d]^-{\psi_{p^{t-1}}^N} \\
			\frac{\aug{G}}{\aug{G}^2} \ar[r]_-{\Lambda_G^{t-1}} & \frac{\aug{G}^{p^{t-1}}+\augNor{N}{G}}{\aug{G}^{p^{t-1}+1}+ \augNor{N}{G}}
	}
\end{equation*}
where $\tilde \lambda_G^{t-1}$ is given by the $p^{t-1}$-power map in the group, $x\Phi(G)\mapsto x^{p^{t-1}}\M_{p^{t-1}+1}(G)N$. Observe that the image of $\tilde\lambda_G^{t-1} $ is contained, by definition, in $\mho_{t-1}(G)$. Moreover, restricting this diagram to $\Omega_t(\ZZ(G))\Phi(G)/\Phi(G)$ we have that 
\begin{equation*}
	\xymatrix{
		\frac{\Omega_t(\ZZ(G))\Phi(G)}{\Phi(G) } \ar[d]_-{\psi_1} \ar[r]^-{\tilde \lambda_G^{t-1}} & \frac{\mho_{t-1}(G)N}{\M_{p^{t-1}+1}(G)N} \ar[d]^-{\psi_{p^{t-1}}^N} \\
		\frac{\augNor{\Omega_t(\ZZ(G))G'}{G}+\aug{G}^2}{\aug{G}^2} \ar[r]_-{\Lambda_G^{t-1}} & \frac{\aug{G}^{p^{t-1}}+\augNor{N}{G}}{\aug{G}^{p^{t-1}+1}+ \augNor{N}{G}}
	}
\end{equation*}
commutes, where $\psi_1$ is still an isomorphism. Now set $N=\mho_t(G)G'$. As $\mho_t(G)\subseteq \mho_{t-1}(G)$ and $\M_{p^{t-1}+1}(G)\subseteq \mho_t(G)G'$, we have that $\tilde \lambda_G^{t-1}=\lambda_G^{t-1}$ and the previous diagram becomes:
\begin{equation}\label{diagram:prelema}\begin{array}{c} 
	\xymatrix{
		\frac{\Omega_t(\ZZ(G))\Phi(G)}{\Phi(G) } \ar[d]_-{\psi_1} \ar[r]^-{\lambda_G^{t-1}} & \frac{\mho_{t-1}(G)G'}{\mho_t(G)G'} \ar[d]^-{\psi_{p^{t-1}}^{\mho_t(G)G'}} \\
		\frac{\augNor{\Omega_t(\ZZ(G))G'}{G}+\aug{G}^2}{\aug{G}^2} \ar[r]_-{\Lambda_G^{t-1}} & \frac{\aug{G}^{p^{t-1}}+\augNor{\mho_t(G)G'}{G}}{\aug{G}^{p^{t-1}+1}+ \augNor{\mho_t(G)G'}{G}}.
	}
\end{array}
\end{equation} 
Observe that by the election of $N$ the diagram can be seen as constructed from the commutative group algebra $k(G/G')$, so that  both $\lambda_G^{t-1}$ and $\Lambda_G^{t-1}$ are homomorphisms of elementary abelian groups/vector spaces over $k$. 
\begin{lemma}\label{subspaceV}
	Let $V_G $ be a subspace of $\aug{G}$ containing $\aug{G}^2$. With the notation above, the following conditions are equivalent:
	
	\begin{enumerate}
		\item There is a direct sum decomposition
		\begin{equation}
		\label{eq:directsum}\frac{V_G }{\aug{G}^2}\oplus \ker \Lambda_G^{t-1}=	\frac{\augNor{\Omega_t(\ZZ(G))G'}{G}+\aug{G}^2}{\aug{G}^2} .
		\end{equation}  
		 \item There exists a  homocyclic decomposition $\mathfrak d$ of $G$ such that 
		 $$  V_G = \augNor{H_t^{\mathfrak d}(G)}{G}+\aug{G}^2.$$
	\end{enumerate}

\end{lemma}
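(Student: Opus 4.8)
The plan is to transport the whole statement through the vertical isomorphism $\psi_1$ of the commutative diagram \eqref{diagram:prelema}, thereby reducing condition (1) to the purely group-theoretic criterion \Cref{lemma:complement}\eqref{lemma:complement1}. The crucial preliminary observation is that $\psi_1$ restricts to an isomorphism
$$\psi_1 \colon \frac{\Omega_t(\ZZ(G))\Phi(G)}{\Phi(G)} \xrightarrow{\ \sim\ } \frac{\augNor{\Omega_t(\ZZ(G))G'}{G}+\aug{G}^2}{\aug{G}^2}$$
which carries $\ker \lambda_G^{t-1}$ \emph{onto} $\ker \Lambda_G^{t-1}$. This last fact follows at once from the commutativity of \eqref{diagram:prelema}: since $\psi_{p^{t-1}}^{\mho_t(G)G'}$ is injective and $\psi_1$ is bijective onto the displayed target, a class lies in $\ker\lambda_G^{t-1}$ if and only if its $\psi_1$-image lies in $\ker\Lambda_G^{t-1}$. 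I will also repeatedly use the identity $\psi_1(N\Phi(G)/\Phi(G)) = (\augNor{N}{G}+\aug{G}^2)/\aug{G}^2$, valid for any normal subgroup $N$, recorded just before the statement.

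For the implication (2) $\Rightarrow$ (1) I would set $T = H_t^{\mathfrak d}(G)$. Then $T\subseteq \Omega_t(\ZZ(G))$ by \Cref{list}\eqref{l1} and $d(T) = d(T\Phi(G)/\Phi(G))$ by \Cref{list}\eqref{l2}, so \Cref{lemma:complement} applies and yields the group-side decomposition
$$\frac{\Omega_t(\ZZ(G))\Phi(G)}{\Phi(G)} = \frac{H_t^{\mathfrak d}(G)\Phi(G)}{\Phi(G)} \oplus \ker \lambda_G^{t-1}.$$
Applying $\psi_1$ and using both the kernel correspondence above and $\psi_1(H_t^{\mathfrak d}(G)\Phi(G)/\Phi(G)) = (\augNor{H_t^{\mathfrak d}(G)}{G}+\aug{G}^2)/\aug{G}^2 = V_G/\aug{G}^2$ turns this directly into \eqref{eq:directsum}, i.e.\ condition (1).

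For the converse (1) $\Rightarrow$ (2) I would start from $\bar V_G := \psi_1^{-1}(V_G/\aug{G}^2)$, a subspace of $\Omega_t(\ZZ(G))\Phi(G)/\Phi(G)$; this is legitimate because condition (1) forces $V_G/\aug{G}^2$ to sit inside the target of $\psi_1$. Choosing a basis of $\bar V_G$ and lifting it (exactly as in the proof of \Cref{lemma:complement}) to elements of $\Omega_t(\ZZ(G))$, I obtain a subgroup $T \subseteq \Omega_t(\ZZ(G))$ with $T\Phi(G)/\Phi(G) = \bar V_G$ and $d(T) = \dim_{\F_p}\bar V_G = d(T\Phi(G)/\Phi(G))$. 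Transporting \eqref{eq:directsum} back through $\psi_1^{-1}$, and again invoking the kernel correspondence, yields exactly \Cref{lemma:complement}\eqref{lemma:complement1} for this $T$; that lemma then produces a homocyclic decomposition $\mathfrak d$ with $T = H_t^{\mathfrak d}(G)$. Finally $V_G/\aug{G}^2 = \psi_1(\bar V_G) = \psi_1(H_t^{\mathfrak d}(G)\Phi(G)/\Phi(G)) = (\augNor{H_t^{\mathfrak d}(G)}{G}+\aug{G}^2)/\aug{G}^2$, and since both $V_G$ and $\augNor{H_t^{\mathfrak d}(G)}{G}+\aug{G}^2$ contain $\aug{G}^2$, this gives condition (2).

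The main obstacle I anticipate is bookkeeping rather than conceptual: one must check that the subgroup $T$ extracted from the subspace $\bar V_G$ can genuinely be chosen inside $\Omega_t(\ZZ(G))$ with $d(T)$ equal to $\dim_{\F_p}\bar V_G$ (so that \Cref{lemma:complement} is applicable and no generator is absorbed into $\Phi(G)$), and that both $\ker \lambda_G^{t-1}$ and $\ker \Lambda_G^{t-1}$ are read inside the restricted ambient spaces occurring in \eqref{diagram:prelema}, so that the kernel correspondence is an \emph{equality} of subspaces and not merely an inclusion. Once these identifications are pinned down, the equivalence is a formal transport of \Cref{lemma:complement} across the isomorphism $\psi_1$.
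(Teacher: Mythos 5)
Your proposal is correct and follows essentially the same route as the paper's own proof: both transport condition (1) through the isomorphism $\psi_1$ of diagram \eqref{diagram:prelema} (using the injectivity of $\psi_{p^{t-1}}^{\mho_t(G)G'}$ to identify the kernels of $\lambda_G^{t-1}$ and $\Lambda_G^{t-1}$), and then reduce both implications to \Cref{lemma:complement}, with the converse direction handled by lifting a Burnside basis of $\psi_1^{-1}(V_G/\aug{G}^2)$ to a subgroup $T\subseteq \Omega_t(\ZZ(G))$. The bookkeeping points you flag (that $T$ can be chosen inside $\Omega_t(\ZZ(G))$ with $d(T)=\dim_{\F_p}\bar V_G$, and that the kernels are read in the restricted ambient spaces) are exactly the ones the paper's proof also relies on, and they hold as you describe.
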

\begin{proof}   Since $\psi_1$ is an isomorphism, $\psi_{p^{t-1}}^{\mho_t(G)G'}$ is injective,  and the diagram \eqref{diagram:prelema} is commutative, \eqref{eq:directsum} is equivalent to
	\begin{equation}\label{eq:directproduct}
	\psi_1^{-1}\left(\frac{V_G }{\aug{G}^2} \right)\oplus \ker\lambda_G^{t-1}=\frac{\Omega_t(\ZZ(G))\Phi(G)}{\Phi(G)}.
	\end{equation} 
	If $V_G =\augNor{H_t^{\mathfrak d}(G)}{G}+\aug{G}^2$ for some decomposition $\mathfrak d$ of $G$ then $\psi_1^{-1}(V_G /\aug{G}^2)=H_t^{\mathfrak d}(G)\Phi(G)/\Phi(G)$, and hence \eqref{eq:directproduct} follows from \Cref{lemma:complement}.
  Conversely, assume that \eqref{eq:directproduct} holds. Let  $\{x_1,\dots, x_r\}$ be a (Burnside) basis of $ \psi_1^{-1}(V_G/\aug {G}^2) $. Take elements $y_i\in \Omega_t(\ZZ(G))$ such that  $y_i\Phi(G)=x_i$ for $1\leq i\leq r$, and set $T=\GEN{y_1,\dots, y_r} $. Then $d(T)=r=d(T\Phi(G)/\Phi(G))$ and $\psi_1^{-1}(V_G /\aug{G}^2)=T\Phi(G)/\Phi(G)$. Therefore \Cref{lemma:complement} yields that there is a  decomposition $\mathfrak d$ of $G$ such that 
  $T=H_t^{\mathfrak d}(G)$, so that
  $ V_G^t=\augNor{T}{G}+\aug{G}^2=\augNor{H_t^{\mathfrak d}(G)}{G}+\aug{G}^2 $ 
  as desired.

\end{proof}

\begin{lemma}\label{lemma:pretheorem}
	Let $\phi:kG\to kH$ be an isomorphism preserving the augmentation, and $\mathfrak d$ be a  homocyclic decomposition of $G$. Then there exists a  homocyclic decomposition $\bar {\mathfrak d}$ of $H$ such that if $H=S\oplus H_t^{\bar{\mathfrak d}}(H)$, then  $kH=\phi(\augNor{H_t^{\mathfrak d}(G)}{G})\oplus kS$.
\end{lemma}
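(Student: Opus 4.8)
The plan is to transport the subspace decomposition of \Cref{subspaceV} across the isomorphism $\phi$ and then invoke \Cref{lemma:semidirect2}. First I would set $V_G = \augNor{H_t^{\mathfrak d}(G)}{G} + \aug{G}^2$, so that \Cref{subspaceV} applied to $\mathfrak d$ gives the direct sum decomposition \eqref{eq:directsum} for $V_G$. The key point is that every node and arrow of the diagram \eqref{diagram:prelema} is respected by $\phi$. Since $\phi$ preserves the augmentation it satisfies $\phi(\aug{G}^n)=\aug{H}^n$ for all $n$; by \eqref{Salim1} it satisfies $\phi(\augNor{\mho_t(G)G'}{G})=\augNor{\mho_t(H)H'}{H}$; and by \Cref{mainlemma}\eqref{mainlemma2} (with $N_\Gamma=\mho_t(\Gamma)\Gamma'$, a legitimate choice thanks to \eqref{Salim1}), together with the observation that $\augNor{\Omega_t(\ZZ(\Gamma))\Gamma'}{\Gamma}+\aug{\Gamma}^2 = \augNor{\Omega_t(\ZZ(\Gamma))\mho_t(\Gamma)\Gamma'}{\Gamma}+\aug{\Gamma}^2$ (both sides correspond under $\psi_1$ to $\Omega_t(\ZZ(\Gamma))\Phi(\Gamma)/\Phi(\Gamma)$, as $\mho_t(\Gamma)\Gamma',\Gamma'\subseteq\Phi(\Gamma)$), one obtains $\phi(\augNor{\Omega_t(\ZZ(G))G'}{G}+\aug{G}^2)=\augNor{\Omega_t(\ZZ(H))H'}{H}+\aug{H}^2$. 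Finally, being a ring isomorphism, $\phi$ intertwines the $p^{t-1}$-power maps, hence it carries $\Lambda_G^{t-1}$ to $\Lambda_H^{t-1}$ and in particular maps $\ker\Lambda_G^{t-1}$ onto $\ker\Lambda_H^{t-1}$.

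With these compatibilities in hand, applying the induced isomorphism $\aug{G}/\aug{G}^2\to\aug{H}/\aug{H}^2$ to \eqref{eq:directsum} yields the analogous decomposition for $H$ with $V_H := \phi(V_G)$, a subspace of $\aug{H}$ containing $\aug{H}^2$. By \Cref{subspaceV} applied to $V_H$, there is a homocyclic decomposition $\bar{\mathfrak d}$ of $H$ with $\phi(\augNor{H_t^{\mathfrak d}(G)}{G})+\aug{H}^2 = V_H = \augNor{H_t^{\bar{\mathfrak d}}(H)}{H}+\aug{H}^2$. This $\bar{\mathfrak d}$ is the decomposition we seek.

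It then remains to apply \Cref{lemma:semidirect2} to $H = S\oplus H_t^{\bar{\mathfrak d}}(H)$ with $N = H_t^{\bar{\mathfrak d}}(H)$, $L = S$ and the proper ideal $J = \phi(\augNor{H_t^{\mathfrak d}(G)}{G})$. Condition \eqref{lemma:semidirect2.2} is precisely the equality just obtained, since $\aug{N}kH = \augNor{H_t^{\bar{\mathfrak d}}(H)}{H}$. For condition \eqref{lemma:semidirect2.1} I would compute, using that $\augNor{H_t^{\mathfrak d}(G)}{G}$ is the kernel of the surjection $kG\to k(G/H_t^{\mathfrak d}(G))$ and that $\phi$ is an isomorphism, $\text{\rm codim}_{kH}(J)=\text{\rm codim}_{kG}(\augNor{H_t^{\mathfrak d}(G)}{G})=|G|/|H_t^{\mathfrak d}(G)|$; then, invoking $|G|=|H|$ and $H_t(G)\cong H_t(H)$ from \Cref{lemma:abelianfactor}, I rewrite this as $|H|/|H_t^{\bar{\mathfrak d}}(H)| = |S|$. \Cref{lemma:semidirect2} then gives $kH = J\oplus kS = \phi(\augNor{H_t^{\mathfrak d}(G)}{G})\oplus kS$, as required.

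The step I expect to be the main obstacle is the bookkeeping of the first paragraph: verifying that all four corners and both horizontal maps of the diagram \eqref{diagram:prelema} transform correctly under $\phi$, so that the chosen direct-sum complement of $\ker\Lambda_G^{t-1}$ is carried to a direct-sum complement of $\ker\Lambda_H^{t-1}$. This rests entirely on the $k$-canonicity statements furnished by the Transfer Lemma and on the harmless identification of $\mho_t\Gamma'$- and $G'$-type relative augmentation ideals modulo $\aug{\Gamma}^2$; once these are secured, the two applications of \Cref{subspaceV} and the codimension count are routine.
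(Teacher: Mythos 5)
Your proposal is correct and follows essentially the same route as the paper's own proof: transport the decomposition of \Cref{subspaceV} across $\phi$ using the Transfer-Lemma compatibilities (which the paper packages as the commutative diagram \eqref{diagram3}), reapply \Cref{subspaceV} to obtain $\bar{\mathfrak d}$, and conclude with \Cref{lemma:semidirect2} via the codimension count coming from \Cref{lemma:abelianfactor}. The only cosmetic difference is that you invoke \Cref{mainlemma}\eqref{mainlemma2} with $N_\Gamma=\mho_t(\Gamma)\Gamma'$ and then identify the relevant ideals modulo $\aug{\Gamma}^2$, whereas the paper cites \eqref{eq:example2} (the case $N_\Gamma=\Gamma'$) directly.
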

\begin{proof}  Set $V_G = \augNor{H_t^{\mathfrak d}(G)}{G}+\aug{G}^2$, so that by \Cref{subspaceV} we have that 
	$$\frac{V_G }{\aug{G}^2}\oplus \ker\Lambda_G^{t-1}=\frac{\augNor{\Omega_t(\ZZ(G))G'}{G}+\aug{G}^2}{\aug{G}^2}.  $$
It follows	from  \eqref{Salim1}  and \eqref{eq:example2}   that $\phi$ induces isomorphisms $\tilde \phi$ and $\hat\phi$ such that the following diagram commutes:
	\begin{equation}\label{diagram3}\begin{array}{c}
		\xymatrix{
			\frac{\augNor{\Omega_t(\ZZ(G))G'}{G}+\aug{G}^2}{\aug{G}^2} \ar[d]_-{\tilde \phi} \ar[r]^-{\Lambda_G^{t-1}} & \frac{\aug{G}^{p^{t-1}}}{\aug{G}^{p^{t-1}+1}+\aug{\mho_t(G)G'}{G} } \ar[d]^-{\hat\phi} \\
			\frac{\augNor{\Omega_t(\ZZ(H))H'}{H}+\aug{H}^2}{\aug{H}^2} \ar[r]_-{\Lambda_H^{t-1}} & \frac{\aug{H}^{p^{t-1}}}{\aug{H}^{p^{t-1}+1}+\aug{\mho_t(H)H'}{H} }
		} 
\end{array}
	\end{equation}
	In particular this shows that $\tilde \phi(\ker \Lambda_G^{t-1})=\ker \Lambda_H^{t-1}$, and therefore that
	$$\frac{\phi(V_G)}{\aug{H}^2}\oplus \ker \Lambda_H^{t-1}=	\frac{\augNor{\Omega_t(\ZZ(H))H'}{H}+\aug{H}^2}{\aug{H}^2}.$$   Applying once more \Cref{subspaceV}, we derive the existence of a homocyclic  decomposition $\bar {\mathfrak d}$ of $H$ verifying
	$$\phi(V_G)=\augNor{H_t^{\bar {\mathfrak d}}(H)}{H}+\aug{H}^2 . $$  
	Now $\phi(\augNor{H_t^{\mathfrak d}(G)}{G})$ is an ideal of $kH$ verifying the following conditions:
	\begin{enumerate}
		\item $\text{codim}_{kH}(\phi(\augNor{H_t^{\mathfrak d}(G)}{G}))=\text{codim}_{kG}(\augNor{H_t^{\mathfrak d}(G)}{G})=|G|/|H_t^{\mathfrak d} (G)|=|H|/|H_t^{\bar{\mathfrak d}}(H)| $, where the last equality is due to   \Cref{lemma:abelianfactor};
		\item $\phi(\augNor{H_t^{\mathfrak d}(G)}{G})+\aug{H}^2=\phi\left(\augNor{H_t^{\mathfrak d}(G)}{G}+\aug{G}^2 \right)=\phi(V_G)=\augNor{H_t^{\bar{\mathfrak d}}(H)}{H}+ \aug{H}^2.$
	\end{enumerate}
Thus   \Cref{lemma:semidirect2} yields that for any subgroup $S$ of $H$ such that $H=S\oplus H_t^{\bar{\mathfrak d}}(H)$ one has that $kH=\phi(\augNor{H_t^{\mathfrak d}(G)}{G}) \oplus kS$, as desired.
\end{proof}

Now \Cref{maintheorem} follows easily: 
\begin{theorem}\label{theorem:reduction}
	Let $G$ and $H$ be finite $p$-groups  and $k=\F_p$. Then   \begin{equation*}kG\cong kH \qquad \text{\rm if and only if}\qquad 
		k\left(\textrm{\rm NAb}(G)\right)\cong k\left( \textrm{\rm NAb}(H)\right) \qand  \textrm{\rm Ab}(G)\cong\textrm{\rm Ab}(H).
	\end{equation*}
\end{theorem}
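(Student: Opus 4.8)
The plan is to prove the two implications separately; the reverse implication is formal, while the forward one absorbs all the real content into the lemmas already at our disposal.

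For the direction $\Leftarrow$, I would simply combine the facts that isomorphic groups have isomorphic group algebras with the natural isomorphism $k(A\oplus B)\cong kA\otimes_k kB$ for a direct product of finite groups. Writing $G\cong \text{NAb}(G)\oplus \text{Ab}(G)$ and likewise for $H$, the hypotheses $k(\text{NAb}(G))\cong k(\text{NAb}(H))$ and $\text{Ab}(G)\cong \text{Ab}(H)$ give $kG\cong k(\text{NAb}(G))\otimes_k k(\text{Ab}(G))\cong k(\text{NAb}(H))\otimes_k k(\text{Ab}(H))\cong kH$.

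For the direction $\Rightarrow$, the isomorphism $\text{Ab}(G)\cong \text{Ab}(H)$ is precisely \Cref{mainprop} (restated as \Cref{lemma:abelianfactor}), so the task reduces to proving $k(\text{NAb}(G))\cong k(\text{NAb}(H))$. I would argue by induction on $|\text{Ab}(G)|$. The base case $\text{Ab}(G)=1$ is immediate, since \Cref{lemma:abelianfactor} forces $\text{Ab}(H)=1$ as well, so that $k(\text{NAb}(G))=kG\cong kH=k(\text{NAb}(H))$. For the inductive step, fix $t$ with $H_t(G)\neq 1$ (hence $H_t(H)\neq 1$, again by \Cref{lemma:abelianfactor}) and a homocyclic decomposition $\mathfrak{d}$ of $G$, writing $G=S_G\oplus H_t^{\mathfrak{d}}(G)$. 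Because $H_t^{\mathfrak{d}}(G)\subseteq \ZZ(G)$ by \Cref{list}\eqref{l1}, the relative augmentation ideal $\augNor{H_t^{\mathfrak{d}}(G)}{G}$ equals $kG\cdot\aug{H_t^{\mathfrak{d}}(G)}$ and the tensor factorisation $kG\cong kS_G\otimes_k kH_t^{\mathfrak{d}}(G)$ yields the algebra decomposition $kG=kS_G\oplus \augNor{H_t^{\mathfrak{d}}(G)}{G}$; thus $kS_G\cong kG/\augNor{H_t^{\mathfrak{d}}(G)}{G}$. Now let $\phi:kG\to kH$ be an isomorphism, which automatically preserves the augmentation since $\aug{G}$ is the Jacobson radical of $kG$. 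By \Cref{lemma:pretheorem} there is a homocyclic decomposition $\bar{\mathfrak{d}}$ of $H$ and a complement $S_H$ of $H_t^{\bar{\mathfrak{d}}}(H)$ with $kH=\phi(\augNor{H_t^{\mathfrak{d}}(G)}{G})\oplus kS_H$, whence $kS_H\cong kH/\phi(\augNor{H_t^{\mathfrak{d}}(G)}{G})$. Since $\phi$ carries the ideal $\augNor{H_t^{\mathfrak{d}}(G)}{G}$ onto $\phi(\augNor{H_t^{\mathfrak{d}}(G)}{G})$, it descends to an isomorphism of the quotients, giving $kS_G\cong kS_H$.

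To close the induction, I would invoke the Krull--Remak--Schmidt theorem: stripping off the abelian direct factor $H_t^{\mathfrak{d}}(G)$ leaves the non-abelian part untouched, so $\text{NAb}(S_G)=\text{NAb}(G)$ and $\text{NAb}(S_H)=\text{NAb}(H)$, while $|\text{Ab}(S_G)|=|\text{Ab}(G)|/|H_t(G)|<|\text{Ab}(G)|$. Applying the induction hypothesis to $kS_G\cong kS_H$ then delivers $k(\text{NAb}(G))\cong k(\text{NAb}(H))$, as required. I expect the only delicate point in this argument to be the clean extraction of the induced isomorphism $kS_G\cong kS_H$ from the two direct-sum decompositions, namely verifying that $\augNor{H_t^{\mathfrak{d}}(G)}{G}$ genuinely splits off $kS_G$ as an algebra complement (which rests on the centrality of $H_t^{\mathfrak{d}}(G)$) and that $\phi$ identifies the two quotients compatibly; the substantive homocyclic-component analysis has already been carried out in \Cref{lemma:pretheorem} and \Cref{lemma:abelianfactor}.
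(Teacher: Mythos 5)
Your proposal is correct and takes essentially the same approach as the paper: the reverse implication is the same tensor-product argument, and your inductive step---stripping one nontrivial homocyclic component using \Cref{lemma:pretheorem} together with the splitting $kG=\augNor{H_t^{\mathfrak d}(G)}{G}\oplus kS_G$ and passing to the isomorphic quotient algebras to get $kS_G\cong kS_H$---is precisely the mechanism of the paper's proof, which merely organizes the same argument as a recursively defined sequence $(\mathcal G_i,\mathcal H_i,\phi_i)$ iterating over exponents $p^i$ instead of an induction on $|\textrm{Ab}(G)|$. Your side remark that any $k$-algebra isomorphism $kG\to kH$ automatically preserves the augmentation is a valid (and standard) observation that the paper leaves implicit.
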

\begin{proof}
	If $	k\left(\textrm{\rm NAb}(G)\right)\cong k\left( \textrm{\rm NAb}(H)\right)$ and $ \textrm{\rm Ab}(G)\cong\textrm{\rm Ab}(H)$, then also $k\left(\text{\rm Ab}(G)\right)\cong k\left(\text{\rm Ab}(H)\right)$, so that 
	$$kG\cong  k\left(\textrm{\rm NAb}(G)\right)\otimes_k k\left(\text{\rm Ab}(G)\right)\cong k\left( \textrm{\rm NAb}(H)\right)\otimes_k k\left(\text{\rm Ab}(H)\right)\cong kH. $$
	Conversely, assume that $\phi:kG\to kH$ is an isomorphism preserving the augmentation. Then by \Cref{lemma:abelianfactor} we have that $\text{\rm Ab}(G)\cong\text{\rm Ab}(H)$. Consider a sequence $(\mathcal G_i, \mathcal H_i,\phi_i)_{i}$  defined recursively as follows. Set $ \mathcal G_0= G$, $ \mathcal H_0= H$ and $\phi_0=\phi$.
	Assume that for $i\geq 0$ the tuple $(\mathcal G_i,\mathcal H_i,\phi_i)$ is defined such that $\phi_i:k\mathcal G_i\to k\mathcal H_i$ is an isomorphism preserving the augmentation. Choose a  homocyclic decomposition $\mathfrak d_i $ of $\mathcal G_i$ and take a subgroup $\mathcal G_{i+1}$ of $\mathcal G_i$ such that $\mathcal G_i=\mathcal G_{i+1}\oplus H_i^{\mathfrak d_i}(\mathcal G_i)$; then by \Cref{lemma:pretheorem} there is a  homocyclic decomposition $\bar{\mathfrak d}_i$ of $\mathcal H_i$ such that, given a subgroup $\mathcal H_{i+1}$ of $\mathcal H_i$ satisfying $\mathcal H_i=\mathcal H_{i+1}\oplus H_i^{\bar {\mathfrak d}_i}(\mathcal H_i)$, one has that
	$$k\mathcal H_i=\phi(\augNor{ H_i^{\mathfrak d_i}(\mathcal G_i)}{\mathcal G_i}) \oplus  k\mathcal H_{i+1}.$$
	Since $k\mathcal G_i=\augNor{H_i^{\mathfrak d_i}(\mathcal G_i)}{\mathcal G_i}\oplus k\mathcal G_{i+1}$ by \Cref{lemma:semidirect1} for $n=1$, it follows that $\phi_i$ induces an isomorphism
	$$\phi_{i+1}:k\mathcal G_{i+1}\to k\mathcal H_{i+1},$$ and we can advance to the next step of the sequence. If $i$ is large enough (for example if $p^i$ exceeds the exponent of $G$), by the Krull-Remak-Schmidt theorem    $\mathcal G_i\cong \text{\rm NAb}(G)$ and $\mathcal H_i\cong \text{\rm NAb}(H)$. Thus we have an isomorphism $\phi_i:k\left(\text{\rm NAb}(G)\right) \to k\left(\text{\rm NAb}(H)\right)$ as desired.
\end{proof}

\begin{remarks}\rm 
	Observe that both \Cref{mainprop} and \Cref{maintheorem} rely on the results in \Cref{section:transfer}: indeed, the former depends on the fact that the subgroup assignations $G\mapsto \Omega_t(\ZZ(G))\mho_t(G)G'$ and $G\mapsto \mho_t(G)G'$ are $k$-canonical to show that the isomorphism type of the quotient $\Omega_t(\ZZ(G))\mho_t(G)G'/ \mho_t(G)G'$ is determined by $kG$ in \Cref{lemma:abelianfactor}, while the latter, in addition, uses that $G\mapsto \Omega_t(\ZZ(G))G'$ is $k$-canonical to guarantee that the arguments involving the commutative diagram \eqref{diagram3} make sense. 
\end{remarks}

In the light of \Cref{maintheorem},  \Cref{maincorollary} follows easily. Indeed, it suffices to observe that if a non-abelian finite $p$-group $G$ satisfies one of the properties \eqref{maincorollary1}-\eqref{maincorollary8} in \Cref{maincorollary}, so does $\text{\rm NAb}(G)$, and then apply \Cref{prop:knowncases}. 

\vspace*{0.2cm}

\noindent\textbf{Acknowledgements}. The author  wish to express his  gratitude to Ángel del Río and Mima Stanojkovski for fruitful conversations on the ideas in this paper  and   for their generosity. To the former and to Leo Margolis also for their very useful comments   and careful reading of earlier versions of this paper.

 \bibliographystyle{amsalpha}
 \bibliography{MIP}

\end{document}